\newtheorem{thm}{Theorem}[section]
\newtheorem{THM}{Theorem}
\newtheorem{cor}[thm]{Corollary}
\newtheorem{prop}[thm]{Proposition}
\newtheorem{lemma}[thm]{Lemma}
\theoremstyle{definition}
\newtheorem{definition}[thm]{Definition}
\newtheorem{remark}[thm]{Remark}
\newtheorem{example}[thm]{Example}
\def\X0{X^{\circ}}
\def\Y0{Y^{\circ}}
\numberwithin{equation}{section}       
\begin{document}
\title[Positive characteristic Poincaré Lemma ]
{Positive characteristic Poincaré Lemma  }

\author[E. A. Santos]{Edileno de Almeida SANTOS}
\address{Instituto de Ciência, Engenharia e Tecnologia (ICET) - Universidade Federal dos Vales do Jequitinhonha e Mucuri (UFVJM), Campus do Mucuri, Teófilo Otoni - MG,
Brazil}
\email{edileno.santos@ufvjm.edu.br}

\author[S. Rodrigues]{Sergio RODRIGUES}
\address{Faculdade de Ciências Exatas e Tecnologia (FACET) - Universidade Federal da Grande Dourados (UFGD), Rodovia Dourados - Itahum, Km 12 - Cidade Universitátia, Dourados - MS,
Brazil}
\email{sergiorodrigues@ufgd.edu.br}

\subjclass{37F75} \keywords{Positive Characteristic, Differential forms, Cohomology}

\begin{abstract}

Let $K$ be a field of characteristic $ p>0$ and $\omega$ be an $r$-form in $ K^n$. In this case, differently of fields of characteristic zero, the Poincaré Lemma is not true because there are closed $ r$-forms that are not exact. We present   here a definition of a $p$-closed $r$-forms and   a version of the Poincaré Lemma that is valid for $p$-closed  polynomial or rational $r$-forms on $ K^n$  and, as a consequence,  the de Rham cohomology  modules of $ K^n$ are not trivial.
\end{abstract}

\maketitle

\setcounter{tocdepth}{1}
\sloppy

\section{Introduction}
The Poincaré Lemma (\cite{Warner}, 4.18, p. 155) asserts that  if $ \omega$ is a differential $ r$-form ($ r>0$) on $\mathbb R^n$ or $\mathbb C^n$ (zero characteristic), then: $\omega$ is closed ($d\omega=0)$ if and only if $\omega$ is exact, that is, $\omega= d \eta$ for some $(r-1)$-form $\eta$. This is true in the differentiable category, but also can be done in the algebraic one if $K$ is of characteristic $0$.

If the field $K$ has characteristic $p>0$, there are closed $r$-forms on $ K^n$ that are not exact. M.E. Sweedler in \cite{Sweedler} has  defined the notion of {\it $ p$-closed} $1$-form on $ K^n$, and he proves that $\omega$ is $p$-closed if and if it is exact.


Finally, in Section \ref{S:Main}, we extend the definition of a $p$-closed form. We make the alternative definition of "closedness" for $r$-forms ("$p$-closedness", see Definition \ref{D:p-closed}), in such a way that it is possible to obtain the following algebraic general version of Poincaré Lemma bellow. 

\begin{THM}[Theorem \ref{T:Main}]
Let $K$ be a field of positive characteristic $p>0$ and $ R$ be $ K[z]$ or $ K(z)$. An $r$-form $\omega\in \Omega^r_{R/K}$ is exact if and only if $\omega$ is $p$-closed.
\end{THM}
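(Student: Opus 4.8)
\emph{The plan.} The implication ``exact $\Rightarrow$ $p$-closed'' should be essentially formal: if $\omega=d\eta$ then $d\omega=d^2\eta=0$, while the extra operators entering Definition~\ref{D:p-closed} annihilate every coboundary by their very construction; I would also record at the outset that $p$-closed forms are closed under addition, which follows directly from the shape of Definition~\ref{D:p-closed} and will be used repeatedly. The real content is the converse, and I would prove it by induction on the number $n$ of indeterminates, allowing the ground field to change along the way.

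\emph{Base case.} For $n=1$ one has $\Omega^{r}_{R/K}=0$ for $r\ge 2$, so only a $1$-form $\omega=f\,dz$ is at stake and it is automatically closed; here $\omega$ is exact exactly when $f$ lies in the image of $\partial/\partial z$, that is, when the ``obstruction coefficients'' of $f$ vanish --- in the polynomial case the coefficients of the monomials $z^{kp-1}$, in the rational case also the residues and the higher-order polar analogues at each pole. I would check that unwinding Definition~\ref{D:p-closed} in one variable gives precisely the vanishing of these coefficients, after which an explicit term-by-term antiderivative $g$ with $dg=\omega$ finishes the base case.

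\emph{Inductive step.} Given a $p$-closed $\omega\in\Omega^{r}_{R/K}$ with $R=R'[z_n]$ or $R'(z_n)$ over $R'$ in $n-1$ indeterminates, I would write $\omega=dz_n\wedge\alpha+\beta$ with $\alpha,\beta$ involving only $dz_1,\dots,dz_{n-1}$, and first \emph{integrate out the $dz_n$-component}: applying the one-variable antiderivative of the base case to each coefficient of $\alpha$ regarded as a function of $z_n$ --- legitimate exactly because the part of the $p$-closedness hypothesis concerning the variable $z_n$ kills the relevant obstruction coefficients --- produces $\eta_1$ whose $dz_n$-component is $\alpha$. Replacing $\omega$ by the still-$p$-closed form $\omega-d\eta_1$, I may assume $\omega=\beta$ has no $dz_n$ term; then $d\omega=0$ forces the vanishing of $\partial(\text{coefficients of }\beta)/\partial z_n$, so those coefficients are functions of $z_n^{p}$ (the field has characteristic $p$). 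In the polynomial case I would expand $\beta=\sum_{j\ge0}\beta_j\, z_n^{jp}$ with each $\beta_j\in\Omega^{r}_{R'/K}$; in the rational case I would instead enlarge the ground field to $K':=K(z_n^{p})$ and view $\beta$ as a form over $K'(z_1,\dots,z_{n-1})$. Either way $\beta$ becomes an $r$-form on an $(n-1)$-indeterminate ring that inherits $p$-closedness (the defining operators act coefficientwise and are compatible with the inclusion), so the induction hypothesis yields a primitive; since $z_n^{p}$ and its powers are $d$-closed, reassembling gives $\beta=d\gamma$ with $\gamma$ of the required polynomial, resp.\ rational, type, completing the induction.

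\emph{Main obstacle.} I expect the real work to be the bookkeeping around Definition~\ref{D:p-closed}: at each stage one must split the hypothesis cleanly into the portion that licenses the $z_n$-integration and the portion that survives as $p$-closedness of the reduced form in the remaining variables, and this must be done compatibly with the mixed-partials relations implied by $d\omega=0$. The rational case is the genuine difficulty, because the reduction to ``coefficients in $z_n^{p}$'' has to be reconciled with partial-fraction decompositions and with the requirement that the final primitive live in $K(z_1,\dots,z_n)$ rather than only in $K(z_n^{p})(z_1,\dots,z_{n-1})$; the varying-ground-field device above is meant to absorb this, but it then obliges one to verify that the explicit one-variable integrations never manufacture spurious $z_n$-denominators in the polynomial case.
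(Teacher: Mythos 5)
Your forward implication and your overall induction-on-$n$ skeleton do match the paper's proof, but the central step of your inductive argument has a genuine gap. Writing $\omega=dz_n\wedge\alpha+\beta$, you claim that ``the part of the $p$-closedness hypothesis concerning the variable $z_n$ kills the relevant obstruction coefficients,'' so that every coefficient of $\alpha$ admits an antiderivative in $z_n$. For $r\ge 2$ this is false: Definition~\ref{D:p-closed} only demands the vanishing of the \emph{product} $\partial_{i_1}^{p-1}\cdots\partial_{i_r}^{p-1}(a_I)$, not of $\partial_n^{p-1}(a_I)$ alone, and the latter is exactly the obstruction to $z_n$-integration. The paper's own remark after Corollary~\ref{corollary_P} is a counterexample to your claim: $\omega=(x^{p-1}+y^{p-1})\,dx\wedge dy$ is exact (hence $p$-closed), yet its $dy$-component $-(x^{p-1}+y^{p-1})\,dx$ has a coefficient with $\partial_y^{p-1}(x^{p-1}+y^{p-1})=1\neq 0$, so it has no antiderivative in $y$. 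Thus your step ``produce $\eta_1$ whose $dz_n$-component is $\alpha$'' already breaks down for $2$-forms in two variables; your intuition is correct only for $r=1$, where $p$-closedness really does say $\partial_i^{p-1}(a_i)=0$ for each $i$.

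What the paper does instead, and what your plan is missing, is a finer splitting of the $dz_i$-component (the decomposition theorem proved right after Definition~\ref{D:p-decomposable}): every $p$-closed $\omega$ can be written $\omega=dz_i\wedge(z_i^{p-1}\omega_i+\eta_i)+\tau_i$ with $\omega_i$ a $p$-closed $(r-1)$-form, $\partial_i^{p-1}(\eta_i)=0$, and $\tau_i$ free of $dz_i$. Only the piece $\eta_i$ is integrated in $z_i$; the non-integrable piece $z_i^{p-1}dz_i\wedge\omega_i$ is handled by a \emph{second} induction, on the degree $r$ of the form: $\omega_i$ is exact by that induction, say $\omega_i=d\alpha$, and then $z_i^{p-1}dz_i\wedge\omega_i=d(-z_i^{p-1}dz_i\wedge\alpha)$ is exact without ever integrating $z_i^{p-1}$ with respect to $z_i$. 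Your proposal contains no analogue of this decomposition nor of the induction on $r$, and without them the inductive step cannot be completed. (Your base case, the reduction to the polynomial case, and the passage of the $dz_n$-free remainder to $n-1$ variables over $K(z_n^p)$ are all consistent with the paper.)
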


In Section \ref{S:Decomposition}, we deduce from Poincaré Lemma the general {\it Inverse of Cartier Operator}.

\begin{THM}[Theorem \ref{T:Main2}]
$\gamma_0: \Omega_{K(z)/K}^*\rightarrow H_{K(z)}^*$ is an isomorphism.
\end{THM}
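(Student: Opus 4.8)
The plan is to exhibit $\gamma_0$ as a one-sided inverse of the classical Cartier operator and then upgrade this to a two-sided inverse using the Poincaré Lemma (Theorem \ref{T:Main}). Throughout, write $Z^r\subset\Omega^r_{K(z)/K}$ for the submodule of closed forms and $B^r$ for the submodule of exact forms, so that $H^r_{K(z)}=Z^r/B^r$. Since $z_1,\dots,z_n$ is a $p$-basis of $K(z)$ over $K$, every $r$-form has a unique expansion $\omega=\sum_{i_1<\dots<i_r} g_{i_1\cdots i_r}\,dz_{i_1}\wedge\cdots\wedge dz_{i_r}$, and $\gamma_0$ is the additive map determined on monomials by
\[
\gamma_0\bigl(g\,dz_{i_1}\wedge\cdots\wedge dz_{i_r}\bigr)=\bigl[\,g^{p}\,z_{i_1}^{p-1}\cdots z_{i_r}^{p-1}\,dz_{i_1}\wedge\cdots\wedge dz_{i_r}\,\bigr].
\]
The first step is to check that this is well defined as a map into $H^r_{K(z)}$: independence of the expansion is automatic from the $p$-basis property, and the representing forms are closed by a one-line Leibniz computation, since $d(g^{p})=0$ and every surviving term of $d(z_{i_1}^{p-1}\cdots z_{i_r}^{p-1})$ reintroduces a repeated differential. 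One also records that $\gamma_0$ is multiplicative and behaves Frobenius-semilinearly, $\gamma_0(h\,\omega)=h^{p}\gamma_0(\omega)$, so that it is a homomorphism in the required sense.

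Next I would bring in the Cartier operator $C\colon Z^r\to\Omega^r_{K(z)/K}$, characterized by additivity, multiplicativity, $C(dh)=0$, and $C(h^{p-1}\,dh)=dh$; equivalently, on $Z^r$ it sends $\sum_I g_I^{p}\,z_{i_1}^{p-1}\cdots z_{i_r}^{p-1}\,dz_I + (\text{exact})$ to $\sum_I g_I\,dz_I$. If this operator has not already been constructed, its well-definedness on all of $Z^r$ over the possibly imperfect field $K$ is established by the usual argument using the $p$-basis $z_1,\dots,z_n$. Write $\widetilde{\gamma_0}\colon\Omega^r_{K(z)/K}\to Z^r$ for the explicit closed lift used in the definition of $\gamma_0$; a direct computation gives $C\circ\widetilde{\gamma_0}=\mathrm{id}$. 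Since $C$ annihilates exact forms, this already yields injectivity of $\gamma_0$: if $\gamma_0(\omega)=0$, then $\widetilde{\gamma_0}(\omega)\in B^r$, hence $\omega=C\bigl(\widetilde{\gamma_0}(\omega)\bigr)=0$.

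For surjectivity, take any closed $\eta\in Z^r$ and set $\eta':=\eta-\widetilde{\gamma_0}(C\eta)$. Then $\eta'$ is closed and $C(\eta')=C\eta-C\eta=0$. The key point is that a closed $r$-form killed by $C$ is precisely a $p$-closed form in the sense of Definition \ref{D:p-closed}; granting this, the Poincaré Lemma (Theorem \ref{T:Main}) applies and gives $\eta'=d\xi$ for some $(r-1)$-form $\xi$, whence $[\eta]=\bigl[\widetilde{\gamma_0}(C\eta)\bigr]=\gamma_0(C\eta)$ in $H^r_{K(z)}$. Thus $\gamma_0$ is onto, and together with the previous paragraph it is an isomorphism; carrying this out in each degree $r$ and assembling over $*$ gives the theorem.

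The routine parts are the closedness computation, multiplicativity of $\gamma_0$, and the identity $C\circ\widetilde{\gamma_0}=\mathrm{id}$. I expect the real work to be in the surjectivity step, and more precisely in the bridge between the two descriptions of the relevant forms: showing that ``$\eta$ closed and $C\eta=0$'' coincides with ``$\eta$ is $p$-closed'' in the sense of Definition \ref{D:p-closed}, so that Theorem \ref{T:Main} can be invoked. A secondary technical point, to be handled with care because $K$ need not be perfect, is the construction of $C$ on the full module of closed forms; this rests on the explicit $p$-basis $z_1,\dots,z_n$ of $K(z)/K$, the same mechanism that makes $\gamma_0$ well defined.
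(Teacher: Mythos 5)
Your proposal is correct and follows essentially the same route as the paper: the paper's explicit inverse $\omega\mapsto\sum_I F_0^{-1}\bigl(\partial_I^{p-1}(a_I)\bigr)\,dz_I$ \emph{is} your operator $C$, the lemma that $\partial_I^{p-1}(a_I)\in K(z^p)$ for closed $\omega$ is what makes it well defined, the identity $C\circ\widetilde{\gamma_0}=\mathrm{id}$ is the computation $\partial_I^{p-1}(a(z^p)z_I^{p-1})=(-1)^{|I|}a(z^p)$, and surjectivity comes from the decomposition $\omega=\omega_{\mathcal I}+\omega_{\mathcal R}$ with $\omega_{\mathcal R}$ $p$-closed, hence exact by the Poincar\'e Lemma (your $\eta-\widetilde{\gamma_0}(C\eta)$). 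One caution you already half-anticipate: over imperfect $K$ the characterization of $C$ by multiplicativity and $C(h^{p-1}dh)=dh$, together with the normal form $\sum_I g_I^{p}z_I^{p-1}dz_I+(\text{exact})$, does not determine $C$ on all of $Z^r$ (not every closed form has that shape, since $F$ and $F_0$ have different images), so one must work throughout with the $F_0$-twisted, $p$-basis version rather than the classical Frobenius-semilinear Cartier operator.
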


In fact, we will see also how can be followed the opposite direction in order to conclude the Poincaré Lemma from the Cartier Operator, as made in Section \ref{S:Cartier}.

\section{Positive Characteristic Differential Forms}

We introduce here the main notions about differential forms on $ K^n$, where $ K$ is a positive characteristic field. The  reader can see in \cite{Santos} and \cite{Santos2} the details of the exterior algebra of differential forms over $K^n$.

\begin{definition}
Let $K$ be a field of arbitrary characteristic $p$ ($p=0$ or $p$ is a prime integer). The {\it ring of differential constants} is the subring of $K[z]:=K[z_1,..., z_n]$ given by
$$
K[z^p]:=K[z_1^p,...,z_n^p]
$$
and the {\it field of differential constants} is the sub-field of $K(z):=K(z_1,..., z_n)$ given by
$$
K(z^p):=K(z_1^p,...,z_n^p)
$$
There are natural inclusions $K[z]\subset K(z)$ and $K[z^p]\subset K(z^p)$. Note that, in characteristic $0$, we have $K(z^p)=K(z^0)=K[z^p]=K[z^0]=K$. Otherwise, in prime characteristic $p>0$, we obtain $K[z^p]$ as a $K$-submodule of $K[z]$ generated by $\{G^p: G\in K[z] \}$ and $K(z^p)$ as the $K$-vector subspace of $K(z)$ generated by $\{g^p: g\in K(z) \}$.  The elements of $K(z^p)$ are called  {\it $\partial$-constants}, because $ d (g^p)=d(G^p)=0$.

\end{definition}

 

 
Here we will deal with $\Omega_{R/K}^r$, where $ R$ is the ring $K[z]$ or the field $K(z)$. The closedness or the exactness of an algebraic $ r$-form $ \omega$ is not changed by multiplication by a constant $\lambda \in K$ or a $\partial$-constant $\lambda \in K(z^p)$ and, by (iii) of the next proposition, one can always  suppose that $\omega$ is a polynomial $r$-form.

In what follows we consider multi-indexes $I=(i_1,...,i_n)$. If $J=(j_1,...,j_n)$ is another multi-index of {\it length} $n$, we write $J<I$ (resp. $J\leq I$) if $j_1<i_1$,..., $j_n<i_n$ (resp. $j_1\leq i_1$,..., $j_n\leq i_n$). We indicate: $\overrightarrow{0}=(0,...,0)$ and $\overrightarrow{p}=(p,...,p)$.

\begin{prop}[See also \cite{Brion}, 1.1.1 Lemma, p. 3]\label{preparation1}
Let $ K$ be a field of characteristic $p>0$.
\begin{itemize}
\item[(i)] $K[z]$ is a finite $K[z^p]$-module and $ K(z)$ is a finite dimensional $K(z^p)$-vector space, and $\dim_{K(z^p)}(K(z))=p^n$.
\item[(ii)]
$K(z^p)=\{f: f\in K(z): df=0 \}$  and  $ K[z^p]=\{F: F\in K[z]: dF=0 \}$
\item[(iii)] If $ \omega \in \Omega_{K(z)/K}^r$ is a rational $r$-form, then there is a $ \partial$-constant $ \lambda \in K(z^p)$ such that $ \lambda \omega$ is a polynomial $r$-form. 
\end{itemize}
 \end{prop}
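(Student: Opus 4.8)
My plan is to derive all three items from the ``division with remainder'' decomposition of $K[z]$ over $K[z^p]$, which simultaneously pins down the dimension in (i).

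For (i), I would first observe that writing each exponent as $b_j=p\,q_j+a_j$ with $0\le a_j\le p-1$, so that $z^b=(z^p)^q z^a$, the $p^n$ monomials $z^a$ with $\overrightarrow{0}\le a<\overrightarrow{p}$ span $K[z]$ over $K[z^p]$; since this expansion of each $z^b$ is visibly unique, in fact $K[z]=\bigoplus_{\overrightarrow{0}\le a<\overrightarrow{p}}K[z^p]\,z^a$ is \emph{free} of rank $p^n$ over the domain $K[z^p]$. Localizing at $S=K[z^p]\setminus\{0\}$ then produces a $K(z^p)$-algebra $S^{-1}K[z]\subseteq \operatorname{Frac}(K[z])=K(z)$ which is free of rank $p^n$ as a $K(z^p)$-vector space; since a finite-dimensional algebra over a field that is an integral domain is a field, $S^{-1}K[z]$ is a field containing $K[z]$, hence equals $K(z)$, and therefore $\dim_{K(z^p)}K(z)=p^n$ with the same monomial basis.

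For (ii), the inclusions $K[z^p]\subseteq\{F\in K[z]:dF=0\}$ and $K(z^p)\subseteq\{f\in K(z):df=0\}$ follow at once from $d(G^p)=pG^{p-1}dG=0$ together with $K$-linearity of $d$. For the reverse, I would use that $dz_1,\dots,dz_n$ is a $K(z)$-basis of $\Omega^1_{K(z)/K}$, so $df=0$ is equivalent to $\partial f/\partial z_j=0$ for every $j$; then, writing $f=\sum_{\overrightarrow{0}\le a<\overrightarrow{p}}g_a\,z^a$ with $g_a\in K(z^p)$ via (i) and using $\partial g_a/\partial z_j=0$, the product rule gives $\partial f/\partial z_j=\sum_{a_j\ge 1}a_j g_a z^{a-e_j}$ ($e_j$ the $j$-th unit multi-index), which is a $K(z^p)$-combination of \emph{distinct} basis monomials of (i). Hence $\partial f/\partial z_j=0$ forces $a_j g_a=0$, and as $1\le a_j\le p-1$ we get $g_a=0$ whenever $a_j\ge1$; letting $j$ run over $1,\dots,n$ leaves only $f=g_{\overrightarrow{0}}\in K(z^p)$, which lies in $K[z^p]$ when $f$ was polynomial.

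For (iii), I would take a common denominator $Q=\prod_{f_I\ne0}Q_I\in K[z]$ of the coefficients of $\omega=\sum_{|I|=r}f_I\,dz_I$ (with $f_I=P_I/Q_I$) and set $\lambda=Q^p$: the Frobenius identity $Q^p=\bigl(\sum_b c_b z^b\bigr)^p=\sum_b c_b^{\,p}z^{pb}$ shows $\lambda\in K[z^p]$, so $\lambda$ is a $\partial$-constant, while $\lambda\omega=\sum_I Q^{p-1}(Q f_I)\,dz_I$ is a polynomial $r$-form. The only point I expect to need genuine care is the \emph{lower} bound $\dim_{K(z^p)}K(z)\ge p^n$ in (i) — the $K(z^p)$-independence of the (obviously spanning) $p^n$ monomials; my route extracts this from freeness of $K[z]$ over $K[z^p]$ plus the elementary fact that an integral domain module-finite over a field is a field, but one could instead climb the tower $K(z^p)\subset K(z_1,z_2^p,\dots,z_n^p)\subset\cdots\subset K(z)$ and check each step has degree exactly $p$, i.e.\ $z_i\notin K(z_1,\dots,z_{i-1},z_i^p,\dots,z_n^p)$, by inspecting $z_i$-exponents modulo $p$, in the spirit of the quoted Lemma~1.1.1 of \cite{Brion}.
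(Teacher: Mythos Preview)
Your proof is correct, and parts (ii) and (iii) match the paper's argument almost verbatim. The one notable difference is how you pass from the polynomial to the rational case in (i): you localize $K[z]$ at $S=K[z^p]\setminus\{0\}$ and invoke the fact that an integral domain module-finite over a field is itself a field, whereas the paper simply rewrites any $f=P/Q\in K(z)$ as $f=(1/Q)^p\cdot Q^{p-1}P$, which instantly exhibits $f$ as a $K(z^p)$-multiple of a polynomial since $(1/Q)^p\in K(z^p)$ and $Q^{p-1}P\in K[z]$. The paper's trick is a one-line elementary manipulation that handles spanning directly; your localization route is slightly heavier but more structural, and it has the advantage of making the $K(z^p)$-linear independence of the $p^n$ monomials explicit via freeness of $K[z]$ over $K[z^p]$ (the paper merely asserts they form a basis without separately justifying independence on the rational side).
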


\begin{proof}
\item[(i)]
A base for $ K[z]$ over $K[z^p]$ is given by the monomials $z_1^{i_1}...z_n^{i_n}$, where $0\leq i_1,...,i_n\leq p-1$ and this is also a base  for $ K(z)$ over $K(z^p)$. In fact, if $f=\frac{P}{Q}\in K(z)$, we can write
$$
f=\frac{P}{Q}=\left(\frac{1}{Q}\right)^p\cdot Q^{p-1}P
$$
and the polynomial $Q^{p-1}P$ can be expressed as a $K[z^p]$-linear combination of the monomials $z_1^{i_1}...z_n^{i_n}$.

\item[(ii)] Let $f\in K(z)$ be a rational function. As we have show
$$
f(z)=\sum_{\overrightarrow{0}\leq I <\overrightarrow{p}} g_I(z^p)z^I
$$
Hence, writing $I=(i_{(I,1)},...,i_{(I,n)})$,
$$
\frac{\partial f}{\partial z_j}=\sum_{i_{(I,j)}>0} g_I(z^p)i_{(I,j)}z^Iz_i^{-1}
$$
Since the monomials $i_{(I,j)}z^Iz_i^{-1}$ are $K(z^p)$-linearly independent, if $df=0$, then all $g_I$ with $\overrightarrow{0}< I$ must be $0$, so we must have $f=g_{\overrightarrow{0}}(z^p)\in K(z^p)$.

\item[(iii)] If $\omega$ is a rational $r$-form, then there is a polynomial $ Q \in K[z]$ that clears all the denominators of $\omega$, that is,  $ Q \omega$ is polynomial and then $Q^p \omega$ is polynomial. 

\end{proof}

By part (ii) of the above proposition, $ \Omega_{K(z)/K}^r $ is isomorphic to $ \Omega_{K(z)/K(z^p)}^r$. Similarly $ \Omega_{K[z]/K[z^p]}^r$ is isomorphic to $\Omega_{K[z]/K}^r$.

\section{Positive Characteristic Poincaré Lemma }\label{S:Main}

If $K$ is of characteristic 0 (for example  $K=\mathbb R$ or $K=\mathbb C$) it is well known that  a polynomial $r$-form is closed if and only if it is exact, that is, $d\omega=0$ if and only if $\omega=d\eta$ (Poincaré's Lemma). If $ K$ is of  characteristic $p>0 $, then there exist closed forms that are not exact: for example, $\eta= z_i^{p-1} dz_i \in \Omega^1_{K(z^p)/K} $ is closed, but it is not exact because the "integration $\int z_i^{p-1}dz_i $" does not results a polynomial. In this section we will develop a similar result of Poincaré Lemma over fields of arbitrary characteristic.

The $d$ operator on $r$-forms over $R$, where $R=K[z]$ or $R=K(z)$, gives us a chain sequence:
$$
0\longrightarrow K\hookrightarrow R\overset{d}{\longrightarrow} \Omega^1_{R/K}\overset{d}\longrightarrow ...\overset{d}\longrightarrow\Omega^n_{R/K}\longrightarrow 0
$$

If $K$ has characteristic $0$ and $R=K[z]$, this sequence is proved to be exact (see \cite{Hartshorne}, {\it Proposition (7.1) (Poincaré lemma)}, page 53), that is, an $r$-form is exact if and only if it is closed. In order to give a positive characteristic version of this important theorem, we need more than simply the closedness of forms. The following definition will provide a necessary and sufficient condition for exactness.

\begin{definition}\label{D:p-closed}
Let $K$ be a field of positive characteristic $p>0$. An $r$-form $\omega=\sum_{I} a_I dz_I\in \Omega_{R/K}^r$ is {\it $p$-closed} if $\omega$ is closed (that is, $d\omega=0$) and, for every multi-index $I=(i_1<...<i_r)$,
$$ \partial_I^{p-1} (a_I):=
\left( \frac{\partial^r}{\partial z_I}\right)^{p-1}(a_I)=\frac{\partial^{r(p-1)}}{\partial z_I^{p-1}}(a_I)=\frac{\partial^{r(p-1)}}{\partial z_{i_1}^{p-1}...\partial z_{i_r}^{p-1}}(a_I)=0
$$
When $r=0$, a $0$-form is a rational function and $\omega\in R$ is $p$-closed when $\omega\in K$. 
\end{definition}

If $\eta=\sum_I b_i dz_I\in \Omega^r_{R/K}$, we write
$$ \partial_i (\eta):= \sum_I \partial_i (b_I)dz_I=
\sum_I \frac{\partial b_I}{\partial z_i} dz_I
$$

\begin{definition}\label{D:p-decomposable}
Let $K$ be a field of positive characteristic $p>0$. A {\it $p$-decomposable} $r$-form $\omega\in \Omega^r_{R/K}$ is defined inductively by the following conditions.
\begin{enumerate}
\item If $r=0$, that is, $\omega$ is a rational function, then we must have $\omega\in K$.
\item If $r>0$, then for every $i=1,...,n$, we require that
$$
\omega=dz_i\wedge(z_i^{p-1}\omega_i+\eta_i) +\tau_i
$$
where $\omega_i$, $dz_i\wedge z_i^{p-1}\omega_i$, $dz_i\wedge \eta_i+\tau_i$ are $p$-decomposable, $\partial_i^{p-1}(\eta_i)=0$ and $\tau_i$ is free of $dz_i$.
\end{enumerate}

\end{definition}

\begin{thm}
An $r$-form $\omega\in \Omega^r_{R/K}$, $r>0$, is $p$-closed if and only if, for every $i=1,...,n$,
$$
\omega=dz_i\wedge(z_i^{p-1}\omega_i+\eta_i) +\tau_i=z_i^{p-1}dz_i\wedge\omega_i+dz_i\wedge\eta_i +\tau_i
$$
where $\omega_i$, $z_i^{p-1}dz_i\wedge \omega_i$, $dz_i\wedge \eta_i+\tau_i$ are $p$-closed, $\partial_i^{p-1}(\eta_i)=0$ and $\tau_i$ is free of $dz_i$.
\end{thm}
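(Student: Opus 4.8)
The plan is to prove both directions one coordinate at a time. Fix $i\in\{1,\dots,n\}$ and set $R_i:=\{g\in R:\partial_i g=0\}$; exactly as in Proposition~\ref{preparation1} (applied to the single variable $z_i$) the ring $R$ is free over $R_i$ with basis $1,z_i,\dots,z_i^{p-1}$, so every $f\in R$ is uniquely $f=\sum_{k=0}^{p-1}z_i^{k}g_k$ with $g_k\in R_i$. The two characteristic-$p$ identities that carry the whole argument are then immediate: since $\partial_i g_k=0$, $\partial_i^{p-1}(z_i^{k})=0$ for $k\le p-2$, and $\partial_i^{p-1}(z_i^{p-1})=(p-1)!\equiv-1$ (Wilson), one has
$$
\partial_i^{\,p-1}f=-g_{p-1},\qquad \partial_i^{\,p}f=0 .
$$
Applying this coefficientwise, if $\omega=dz_i\wedge\alpha_i+\tau_i$ with $\alpha_i,\tau_i$ free of $dz_i$, then setting $\omega_i:=-\partial_i^{p-1}\alpha_i$ and $\eta_i:=\alpha_i-z_i^{p-1}\omega_i$ produces a decomposition
$$
\omega=z_i^{p-1}dz_i\wedge\omega_i+dz_i\wedge\eta_i+\tau_i
$$
in which $\omega_i,\eta_i,\tau_i$ are free of $dz_i$, $\partial_i\omega_i=0$, and $\partial_i^{p-1}\eta_i=0$. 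Thus the theorem reduces to showing that $\omega$ is $p$-closed if and only if, for every $i$, the three pieces $\omega_i$, $z_i^{p-1}dz_i\wedge\omega_i$, $dz_i\wedge\eta_i+\tau_i$ of this $i$-decomposition are $p$-closed. Throughout I use the splitting $d=dz_i\wedge\partial_i+d_i^{\perp}$ with $d_i^{\perp}:=\sum_{j\ne i}dz_j\wedge\partial_j$, together with the commutativity of $\partial_i$, $\partial_j$ and $d_i^{\perp}$.

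For the implication ``decomposition $\Rightarrow$ $p$-closed'' I expect only formal manipulation. Closedness of $\omega$ is inherited from the two summands $z_i^{p-1}dz_i\wedge\omega_i$ and $dz_i\wedge\eta_i+\tau_i$. For the derivative condition, given an $r$-subset $I$ I choose $i\in I$ and put $J=I\setminus\{i\}$: the coefficient $a_I$ of $\omega$ on $dz_I$ is $\pm(z_i^{p-1}b_J+c_J)$, where $\pm z_i^{p-1}b_J$ and $\pm c_J$ are precisely the coefficients on $dz_I$ of $z_i^{p-1}dz_i\wedge\omega_i$ and of $dz_i\wedge\eta_i+\tau_i$; hence $\partial_I^{p-1}(a_I)=0$ by the assumed $p$-closedness of those two pieces.

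The substance is the converse. Splitting $d\omega=0$ by $dz_i$-degree yields $d_i^{\perp}\tau_i=0$ and
$$
\partial_i\tau_i=z_i^{p-1}\,d_i^{\perp}\omega_i+d_i^{\perp}\eta_i .
$$
Applying $\partial_i^{p-1}$ to this relation, the left side becomes $\partial_i^{p}\tau_i=0$; on the right $\partial_i^{p-1}d_i^{\perp}\eta_i=d_i^{\perp}\partial_i^{p-1}\eta_i=0$, while the coefficients of $d_i^{\perp}\omega_i$ lie in $R_i$ (because $\partial_i\omega_i=0$), so $\partial_i^{p-1}(z_i^{p-1}d_i^{\perp}\omega_i)=-\,d_i^{\perp}\omega_i$ by the identities above. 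Hence $d_i^{\perp}\omega_i=0$, which with $\partial_i\omega_i=0$ gives $d\omega_i=0$, then $d(z_i^{p-1}dz_i\wedge\omega_i)=-dz_i\wedge z_i^{p-1}d_i^{\perp}\omega_i=0$, and finally $d(dz_i\wedge\eta_i+\tau_i)=0$. For the derivative conditions I run the same trick at coefficient level: for an $(r-1)$-subset $J$ with $i\notin J$, the hypothesis $\partial_{\{i\}\cup J}^{p-1}\bigl(a_{\{i\}\cup J}\bigr)=0$, after commuting $\partial_J^{p-1}$ past $z_i^{p-1}$ and past $\partial_i^{p-1}$ and using $\partial_i b_J=0$, $\partial_i^{p-1}c_J=0$, collapses to $\partial_J^{p-1}b_J=0$. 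This one vanishing supplies all remaining requirements: $\partial_J^{p-1}b_J=0$ is exactly the derivative condition making $\omega_i$ $p$-closed; it forces $\partial_{\{i\}\cup J}^{p-1}(\pm z_i^{p-1}b_J)=\mp\,\partial_i^{p-1}(z_i^{p-1}\partial_J^{p-1}b_J)=0$, so $z_i^{p-1}dz_i\wedge\omega_i$ is $p$-closed; and $\partial_{\{i\}\cup J}^{p-1}(\pm c_J)=0$ (from $\partial_i^{p-1}\eta_i=0$) together with $\partial_L^{p-1}(a_L)=0$ for $L\not\ni i$ (the given $p$-closedness of $\omega$ on subsets avoiding $i$) make $dz_i\wedge\eta_i+\tau_i$ $p$-closed. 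As $i$ was arbitrary, this is the claimed decomposition.

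The only genuinely non-formal point is the step in the converse of hitting the closedness relation $\partial_i\tau_i=z_i^{p-1}d_i^{\perp}\omega_i+d_i^{\perp}\eta_i$ (and, in the same way, the coefficient conditions) with $\partial_i^{p-1}$ in order to \emph{isolate} $d_i^{\perp}\omega_i$ and $\partial_J^{p-1}b_J$: it works precisely because $\partial_i^{p}=0$ while $\partial_i^{p-1}(z_i^{p-1}\,\cdot\,)=-\,\mathrm{id}$ on $R_i$, and these two facts are exactly what turns ``$d\omega=0$'' into the pointwise derivative vanishings of Definition~\ref{D:p-closed}. Everything else---sign bookkeeping, keeping $\omega_i,\eta_i,\tau_i$ free of $dz_i$, and the splitting $d=dz_i\wedge\partial_i+d_i^{\perp}$---is routine.
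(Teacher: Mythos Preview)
Your proof is correct and follows essentially the same route as the paper's: construct the $i$-decomposition by peeling off the top $z_i$-power, then in the converse direction hit the closedness relation with $\partial_i^{p-1}$ (using $\partial_i^p=0$ and $\partial_i^{p-1}(z_i^{p-1})=-1$) to force $d\omega_i=0$, and read off $\partial_J^{p-1}b_J=0$ from $\partial_I^{p-1}a_I=0$ at the level of coefficients. Your explicit splitting $d=dz_i\wedge\partial_i+d_i^{\perp}$ organizes the bookkeeping more transparently than the paper's argument, but the substantive steps and the key identity are identical.
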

\begin{proof}

First suppose that $\omega$ is a closed $r$-form such that, for each $i=1,...,n$, one can write
$$
\omega=dz_i\wedge (z_i^{p-1}\omega_i+\eta_i)+\tau_i=z_i^{p-1}dz_i\wedge \omega_i+dz_i\wedge\eta_i+\tau_i
$$
where $\omega_i$, $z^{p-1}dz_i\wedge \omega_i$, $dz_i\wedge\eta_i+\tau_i$ are $p$-closed, $\partial_i^{p-1}(\eta_i)=0$ and $\tau_i$ is free of $dz_i$. As sum of $p$-closed forms, then $\omega$ is $p$-closed.

Reciprocally, we prove that if $\omega$ is $p$-closed, then $\omega$ has the enunciated decomposition.

If $r=1$ and $\omega=\sum_{i=1}^n a_i dz_i$ is $p$-closed, then
$$
\omega=dz_i\wedge (z_i^{p-1} \omega_i+\eta_i)+\tau_i
$$
where $\omega_i=0$, $\eta_i=a_i$, $\tau_i=\sum_{j\neq i} a_j dz_j$, $ z_i^{p-1}dz_i\wedge \omega_i=0$, $dz_i\wedge \eta_i +\tau_i=\omega$ are $p$-closed,  $\partial_i^{p-1}(\eta_i)=\partial_i^{p-1}(0)=0$ and $\tau_i=0$ is free of $dz_i$.

Let $\omega$ be a $p$-closed $r$-form, $r> 1$. For each $i=1,...,n $, $\omega$ has the decomposition
$$
\omega=dz_i\wedge(z_i^{p-1}\omega_i+\eta_i) +\tau_i
$$
where ${\partial_i}(\omega_i)={\partial_i}^{p-1}(\eta_i)=0$ and $\tau_i$ is free of $dz_i$. We will prove that $ \omega_i$ is $p$-closed.

Since $d\omega=0$, we have
$$
0=d\omega=(-1)d z_i\wedge (z_i^{p-1}d\omega_i+d\eta_i)+d\tau_i
$$
and $d\tau_i=\partial_i(dz_i\wedge\rho_i)$ because $\tau_i$ has no term in $dz_i$. By the above equation we can write
$$
z_i^{p-1}d z_i\wedge d\omega_i=-dz_i\wedge d\eta_i+d\tau_i=-dz_i\wedge d\eta_i+\partial_i(dz_i\wedge\rho_i)
$$
As $\omega_i$ is free of $dz_i$, we can write $d\omega_i=dz_i\wedge \partial_i(\omega_i)+\xi_i$, where $\xi_i$ is free of $dz_i$. Since $\partial_i (\omega_i)=0$, we have that $d\omega_i=\xi_i$ is free of $dz_i$. If $d\omega_i\neq 0$, then also $d z_i\wedge d\omega_i=d z_i\wedge \xi_i\neq 0$ and, as $ (p-1)!\equiv (-1)  \mod  p$, then ${\partial_i}^{p-1}(z_i^{p-1}d z_i\wedge d\omega_i)= (-1)d z_i\wedge d\omega_i\neq 0$, but
$$
\partial_i^{p-1}(-dz_i\wedge d\eta_i+d\tau_i)=-dz_i\wedge d\partial_i^{p-1}(\eta_i)+\partial_i^{p-1}\partial_i(dz_i\wedge \rho_i)=0
$$
Therefore  $d\omega_i=0$, that is, $ \omega_i$ is closed. Now we will prove that $\omega_i$ is $p$-closed. We can write $\omega_i=\sum_J a_Jdz_J$, $\eta_i=\sum_J b_J dz_J$, where $ J=(j_1, \cdots ,j_r)$,  and 
$$
\omega=\sum_I a_I dz_I=dz_i\wedge (z_i^{p-1}\sum_J a_Jdz_J+\sum_J b_J dz_J)+\tau_i
$$
If $ i=i_m \in I=(i_1,...,i_{r})$  and $ J=I(i):=(i_1,\cdots, \widehat{i},..., i_{r})$, then
$$
a_I=(-1)^{m+1}(z_i^{p-1}a_{I(i)}+b_{I(i)})
$$
and, as $ \omega$ is $ p$-closed,
\[\begin{split}
0&=(\partial_I)^{p-1}(a_I)\\
&=(-1)^{m+1}(\partial_{I(i)})^{p-1} (\partial_i)^{p-1}(z_i^{p-1}a_{I(i)})+(\partial_{I(i)})^{p-1} (\partial_i)^{p-1}b_{I(i)}\\
&=(-1)^{m}(\partial_{I(i)})^{p-1}(a_{I(i)})
\end{split}\]
and $\omega_i$ is $p$-closed. Hence $z_i^{p-1}dz_i\wedge\omega_i$ and $dz_i\wedge \eta_i+\tau_i=\omega-z_i^{p-1}dz_i\wedge \omega_i$ are also $p$-closed.

\end{proof}

\begin{remark}
The set of $p$-closed $r$-forms  is a  $K(z^p)$-vector subspace of $\Omega^r_{K(z)/K}$.
\end{remark}

The definition \ref{D:p-closed} generalizes the definition of $p$-closed $1$-form given in \cite{Sweedler}, where a sketch of proof of the following theorem in the case $r=1$ can be found.

\begin{thm}[Positive Characteristic Poincaré Lemma]\label{T:Main}

Let $K$ be a field of positive characteristic $p>0$ and $ R$ be $ K[z]$ or $ K(z)$. An $r$-form $\omega$ is exact if and only if $\omega$ is $p$-closed.
\end{thm}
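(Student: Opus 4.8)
The plan is to prove both directions by induction on the number of variables $n$ (and, within that, to exploit the structural decomposition of $p$-closed forms established in the theorem preceding the statement). The base case $n=1$ is essentially Sweedler's computation: on $K[z_1]$ or $K(z_1)$ a $p$-closed $1$-form $\omega=a_1\,dz_1$ satisfies $\partial_1^{p-1}(a_1)=0$, so writing $a_1=\sum_{k=0}^{p-2} c_k z_1^k + (\text{no }z_1^{p-1}\text{ term})$ in the $K(z_1^p)$-basis $1,z_1,\dots,z_1^{p-1}$, the absence of the $z_1^{p-1}$ term is exactly what makes the naive antiderivative $\sum c_k z_1^{k+1}/(k+1)$ a legitimate element of $R$; conversely every exact $1$-form is visibly $p$-closed. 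For $r=0$ there is nothing to do by the convention in Definition \ref{D:p-closed}.

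For the inductive step I would fix the last variable $z_n$ and apply the decomposition
$$
\omega=dz_n\wedge(z_n^{p-1}\omega_n+\eta_n)+\tau_n=z_n^{p-1}dz_n\wedge\omega_n+dz_n\wedge\eta_n+\tau_n,
$$
where by the previous theorem $\omega_n$, $z_n^{p-1}dz_n\wedge\omega_n$ and $dz_n\wedge\eta_n+\tau_n$ are $p$-closed, $\partial_n^{p-1}(\eta_n)=0$, and $\tau_n$ is free of $dz_n$. The idea is to integrate each of the three visible pieces separately. For $z_n^{p-1}dz_n\wedge\omega_n$: since $\omega_n$ is $p$-closed and free of $dz_n$, I would like to treat it as a $p$-closed form in the remaining variables, apply the inductive hypothesis to get $\omega_n=d_{\widehat{n}}\theta$ with $\theta$ free of $dz_n$, and then check that $\frac{z_n^p}{p}$ is not available — instead observe that $z_n^{p-1}dz_n = d(\text{something})$ fails, so one must instead absorb the $z_n^{p-1}$ into $\eta_n$: in fact the honest move is to write $z_n^{p-1}\omega_n$ and note $\partial_n^{p-1}(z_n^{p-1}\omega_n)=(p-1)!\,\omega_n=-\omega_n$, so $z_n^{p-1}\omega_n$ carries the $z_n^{p-1}$-part and $\eta_n$ carries an antiderivative-friendly part. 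Then $\partial_n$-antidifferentiate $\eta_n$ in $z_n$ (possible precisely because $\partial_n^{p-1}(\eta_n)=0$, mimicking the $n=1$ base case) to produce $\sigma$ with $\partial_n\sigma=\eta_n$ and $\sigma$ a polynomial/rational form, whence $d(dz_n\wedge\sigma$-type corrections$)$ accounts for the $dz_n\wedge\eta_n$ term up to a form free of $dz_n$; finally handle $\tau_n$ (free of $dz_n$) by the inductive hypothesis on $n-1$ variables after checking $\tau_n$ is $p$-closed in those variables. Reassembling the three primitives and cancelling the cross-terms coming from $d$ applied to a form containing $dz_n$ gives $\eta$ with $d\eta=\omega$. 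Conversely, if $\omega=d\eta$ then $d\omega=0$ and a direct coefficient computation (using $(p-1)!\equiv-1$ and $\partial_i^{p-1}\partial_i=\partial_i^p=0$ on $R$, so that $\partial_i^{p-1}$ kills anything of the form $\partial_i(\cdot)$) shows $\partial_I^{p-1}(a_I)=0$ for each $I$; this is the easy direction.

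The main obstacle I anticipate is the bookkeeping in reassembling the primitive: the three pieces $z_n^{p-1}dz_n\wedge\omega_n$, $dz_n\wedge\eta_n$, $\tau_n$ are not independently exact, and when one writes candidate primitives $\xi_1,\xi_2,\xi_3$ and forms $d(\xi_1+\xi_2+\xi_3)$, the terms involving $dz_n$ must cancel against terms where $d$ differentiates in $z_n$ a coefficient that itself depends on $z_n$. Making this cancellation precise requires carefully separating, for each sub-multi-index, the "$z_n$-free" coefficient part from the "$z_n^{p-1}$" part and verifying that the closedness relation $d\omega=0$ — which in the proof of the preceding theorem forced $d\omega_n=0$ — also forces the compatibility needed for the primitives of $\tau_n$ and of $\eta_n$ to glue. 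I would isolate this as the technical heart: everything else (the base case, the easy direction, and the reduction to the three-piece decomposition) is routine given the tools already set up in the excerpt.
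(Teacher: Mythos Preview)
Your overall architecture---induction on the number of variables, the three-piece decomposition in a distinguished variable, integrating $\eta_n$ termwise in $z_n$, and reducing to a $dz_n$-free remainder---matches the paper's proof closely, and your ``easy direction'' is exactly right. But there is a genuine gap in your treatment of the piece $z_n^{p-1}\,dz_n\wedge\omega_n$. You correctly obtain $\omega_n=d\theta$ by induction (since $\omega_n$ is $p$-closed, free of $dz_n$, and has $\partial_n\omega_n=0$), and you correctly note that $z_n^{p-1}\,dz_n$ has no antiderivative. At that point, however, you retreat to the vague plan of ``absorbing'' the term into $\eta_n$ or hoping for cancellation among three candidate primitives. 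Neither works: $\eta_n$ by construction satisfies $\partial_n^{p-1}\eta_n=0$ and so cannot absorb a $z_n^{p-1}$-part, and $\tau_n$ alone is not $p$-closed (only the combination $dz_n\wedge\eta_n+\tau_n$ is), so it cannot be fed to the inductive hypothesis by itself.

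The missing observation is a one-line identity. The form $z_n^{p-1}\,dz_n$ is \emph{closed}, and you already have $\omega_n=d\theta$; the Leibniz rule then gives
\[
z_n^{p-1}\,dz_n\wedge\omega_n \;=\; z_n^{p-1}\,dz_n\wedge d\theta \;=\; -\,d\bigl(z_n^{p-1}\,dz_n\wedge\theta\bigr).
\]
So this piece is exact outright, with an explicit primitive---no integration of $z_n^{p-1}$ is needed. Once you subtract it off you may assume $\omega_n=0$; then your antiderivative $\sigma=\int\eta_n\,dz_n$ reduces $\omega$, modulo an exact form, to something free of $dz_n$ and still $p$-closed, and the induction on $n$ finishes. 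With this identity inserted, the ``main obstacle'' you anticipated dissolves: the pieces are handled \emph{sequentially}, not simultaneously, and no delicate gluing of primitives is required. (The paper runs a double induction on $n$ and $r$, invoking the $r$-hypothesis to exhibit $\omega_1=d\alpha$; your single induction on $n$ is also fine, since $\partial_n\omega_n=0$ lets you regard $\omega_n$ as a form over $n-1$ variables with constants in $K(z_n^p)$.)
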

\begin{proof}
By Proposition \ref{preparation1} there exists  $\lambda \in K[z^p]$ such that $\lambda\cdot \omega \in K[z]$ and hence we can suppose that $R=K[z]$.

$(\Rightarrow)$ Suppose first that $\omega\in \Omega_{R/K}^r$ is exact, that is, $ \omega=d \eta$. The case $r=0$ is trivial. If $r>0$ and $\eta=\sum_J b_J dz_J$, then
$$
\omega=\sum_I a_I dz_I=\sum_{i=1}^n \sum_J \partial_i b_J dz_i\wedge dz_J
$$ 
If $I=(i_1<...<i_k<...<i_r)$ and $I(k):=(i_1<...< \widehat{i_k}<... <i_r)$, then
$$
a_I=\sum_{k=1}^r (-1)^k \partial_{i_k}b_{I(k)}
$$
Since $ \partial_{i_k}^{p-1}\partial_{i_k}=\partial_{i_k}^p=0 $ and $ |I|=r \geq 1 $, then $\partial_{I}^{p-1} (a_I)=0 $.


$ (\Leftarrow)$ Suppose that $\omega$ is $p$-closed. We proceed by induction on $n$ and $r$. For $r=0$ we have that $\omega=\lambda\in K$ and there is nothing to prove. Suppose the property true for $r$-forms, where $r\geq 0$. We now prove by induction on $n$ that if $\omega$ is a $p$-closed $(r+1)$-form, then  $\omega$ is exact. The case $n=0$ is trivial. Suppose the result true for $n\geq 0$ and let $\omega$ be a $(r+1)$-form over $K^{n+1}$. Since $\omega$ is $p$-closed, we can write
$$
\omega=dz_1\wedge(z_1^{p-1}\omega_1+\eta_1) +\tau_1=z_1^{p-1}dz_1\wedge \omega_1+dz_1\wedge \eta_1+\tau_1
$$
where $\omega_1$ is $p$-closed, $\partial_1^{p-1}(\eta_1)=0$ and $\tau_1$ is free of $dz_1$.

By induction hypothesis, $\omega_1$ is exact, that is, $\omega_1=d\alpha$.  Then $z_1^{p-1}dz_1\wedge \omega_1=d(-z_1^{p-1}dz_1 \wedge \alpha)$ is also exact, and hence  $z_1^{p-1}dz_1\wedge \omega_1$ is $p$-closed, by the first part of the proof. In this way, we have that $\omega-z_1^{p-1}dz_1\wedge \omega_1$ is $p$-closed and then we can suppose $\omega_1=0$. We define
$$
\theta=\int \eta_1 dz_1
$$
(in the obvious algebraic "integration" sense, which is possible because $(\frac{\partial}{\partial z_1})^{p-1}(\eta_1)=0$). Then $\theta\in \Omega^{r-1}_{R/K}$ and
$$
d\theta=dz_1\wedge \eta_1+\theta_1
$$
where $\theta_1$ does not involve $dz_1$.

Replacing $\omega$ by $\omega-d\theta=\tau_1-\theta_1$, we reduce to the case where $\omega$ is free of $dz_1$ and we can see it in $\Omega_{K(z_1^p)[z_2,...,z_{n+1}]/K(z_1^p)}^{r+1}$. By our induction hypothesis (over $n$) the $(r+1)$-form $\omega$ is $p$-closed as an element of $\Omega_{K(z_1^p)[z_2,...,z_{n+1}]/K(z_1^p)}^{r+1}$, and hence it is also $p$-closed as an element of $\Omega_{K[z_1,z_2,...,z_{n+1}]/K}^{r+1}$.

\end{proof}


\begin{cor}\label{corollary_P}
Let $\omega=\sum_{I=(i_1<...<i_r)} a_I(z) dz_I\in \Omega^r_{R/K}$ be a closed $r$-form on $ K^n$. If for each multi-index $I=(i_1,...,i_r)$ there is $s\in \{1,...,r\}$ such that
$$
\partial_{i_s}^{p-1}(a_I)=0 ,
$$
then $\omega$ is exact.
\end{cor}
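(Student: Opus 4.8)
The plan is to reduce the hypothesis of the corollary to the stronger hypothesis in the definition of $p$-closedness (Definition \ref{D:p-closed}) and then invoke the Positive Characteristic Poincaré Lemma (Theorem \ref{T:Main}). The obstacle is that, a priori, the hypothesis only gives us \emph{one} index $i_s$ in each multi-index $I$ for which $\partial_{i_s}^{p-1}(a_I)=0$, whereas $p$-closedness demands that the full iterated derivative $\partial_I^{p-1}(a_I) = \partial_{i_1}^{p-1}\cdots\partial_{i_r}^{p-1}(a_I)$ vanishes. But this gap is illusory: the operators $\partial_{i_1},\dots,\partial_{i_r}$ commute, so $\partial_I^{p-1}(a_I)$ is the composition of $\partial_{i_s}^{p-1}$ with the remaining derivatives in any order, and since $\partial_{i_s}^{p-1}(a_I)=0$ we get $\partial_I^{p-1}(a_I)=0$ immediately. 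Thus the hypothesis of the corollary, together with closedness, is exactly the definition of $p$-closed.

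Concretely, I would argue as follows. First note that the operators $\dfrac{\partial}{\partial z_{i_1}},\dots,\dfrac{\partial}{\partial z_{i_r}}$ on $R$ pairwise commute. Hence, writing $I=(i_1<\dots<i_r)$ and letting $s$ be the index furnished by the hypothesis, we have
$$
\partial_I^{p-1}(a_I)=\left(\prod_{t\neq s}\partial_{i_t}^{p-1}\right)\!\left(\partial_{i_s}^{p-1}(a_I)\right)=\left(\prod_{t\neq s}\partial_{i_t}^{p-1}\right)(0)=0 .
$$
Since this holds for every multi-index $I$ occurring in $\omega$, and since $\omega$ is closed by hypothesis, $\omega$ is $p$-closed in the sense of Definition \ref{D:p-closed}.

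Finally, apply Theorem \ref{T:Main}: a $p$-closed $r$-form on $K^n$ (for $R=K[z]$ or $R=K(z)$) is exact. Therefore $\omega=d\eta$ for some $(r-1)$-form $\eta$, which is the assertion. The only point requiring any care is the bookkeeping in the first step — making sure the single vanishing derivative $\partial_{i_s}^{p-1}(a_I)$ can be pulled to the inside of the iterated operator, which is exactly where commutativity of partial derivatives is used; beyond that the corollary is an immediate specialization of the main theorem.
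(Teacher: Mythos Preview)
Your argument is correct and is exactly the paper's approach: the paper's proof consists of the single sentence that the hypothesis immediately implies $p$-closedness, after which Theorem~\ref{T:Main} gives exactness. You have simply spelled out the commutativity step that the paper leaves implicit.
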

\begin{proof}
The enunciated property imply immediately the $p$-closedness condition.

\end{proof}

\begin{remark}
The converse of the corollary is not true. In fact, if
$$
\omega=(x^{p-1}+y^{p-1})dx\wedge dy
$$
then
$$
\omega=d(xy^{p-1}dy-x^{p-1}ydx)
$$
but
$$
{\partial_x}^{p-1}(x^{p-1}+y^{p-1})={\partial_y}^{p-1}(x^{p-1}+y^{p-1})=1
$$
\end{remark}

\section{Decomposition of Closed Forms and the Inverse of Cartier Operator}\label{S:Decomposition}

Consider the $K(z)$-linear transformation
$$
\gamma_0: \Omega_{K(z)/K}^*\rightarrow H_{K(z)}^*
$$
$$
a_I(z)dz_I\mapsto a_I(z^p)z_I^{p-1}dz_I \mod B_{K(z)}^r
$$
where $a(z^p):=a(z_1^p,...,z_n^p)$, and $K(z)$ acts on $H_{K(z)}^*$ via $F_0:K(z)\rightarrow K(z)$, $a(z)\mapsto a(z^p)$. 

Using the Poincaré Lemma, we will prove in this section that $\gamma_0$ is an isomorphism of $K(z)$-vector spaces. When $K$ is a perfect field (in particular if $K$ is algebraically closed), we can take, instead of $\gamma_0$, the $K(z)$-linear transformation
$$
\gamma: \Omega_{K(z)/K}^*\rightarrow H_{K(z)}^*
$$
$$
a_Idz_I\mapsto a^pz_I^{p-1}dz_I \mod B_{K(z)}^r
$$
where $K(z)$ acts on $H_{K(z)}^*$ via the {\it Frobenius endomorphism} $F:K(z)\rightarrow K(z)$, $a\mapsto a^p$. It will be also an isomorphism, called the inverse of {\it Cartier Operator}, as mentioned in the next section,  where we show how the isomorphism $\gamma_0$ can be proved as a consequence of the isomorphism $\gamma$, and how it can be used to prove the Poincaré Lemma. 


We need the following two operators $\phi, \psi: \Omega_{K(z)/K}^r\rightarrow \Omega_{K(z)/K}^r$, given by
$$
\phi(\sum_I a_I dz_I)=\sum_I \partial_I^{p-1}(a_I) dz_I
$$
and
$$
\psi(\sum_I a_I dz_I)=\sum_I \partial_I^{p-1}(a_I)z_I^{p-1} dz_I
$$

We can now make the following definition.

\begin{definition}
We call $ \omega_\mathcal{I}:=\psi(\omega)$ the {\it irrational part} and $\omega_\mathcal{R}:= \omega - \omega_\mathcal{I}$ the {\it rational part} of the closed $ r$-form $ \omega$, giving the decomposition
$$
\omega = \omega_\mathcal{I}+\omega_\mathcal{R}
$$
\end{definition}

Let $I=(i_1<...<i_r)$ be an ordered multi-index. We write, by abuse of notation, $i_1\in I$,..., $i_r\in I$, and $j\notin I$ if $j\neq i_1$,..., $j\neq i_r$. Then
$$
I-i_s:=(i_1<...<\widehat{i_s}<...<i_r)
$$
when $i_s\in I$, and
$$
I+j:=(i_1<...<i_s<j<i_{s+1}<...<i_r)
$$
if $i_s<j<i_{s+1}$ (with the obvious adaptation to extremal situations).

\begin{lemma}
If $\omega\in \Omega_{K(z)/K}^r$ is closed, then $\partial_j\phi(\omega)=0$, for every $j=1$,..., $n$.
\end{lemma}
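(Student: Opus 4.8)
The plan is to compute $\partial_j\phi(\omega)$ componentwise and use the closedness of $\omega$ to kill all the resulting terms. Write $\omega=\sum_I a_I\,dz_I$ with $I=(i_1<\cdots<i_r)$. By definition, $\phi(\omega)=\sum_I \partial_I^{p-1}(a_I)\,dz_I$, so
$$
\partial_j\phi(\omega)=\sum_I \partial_j\,\partial_I^{p-1}(a_I)\,dz_I .
$$
I distinguish two cases according to whether $j\in I$ or $j\notin I$. If $j\in I$, say $j=i_s$, then $\partial_j\partial_I^{p-1}=\partial_{i_s}^p\,\partial_{I-i_s}^{p-1}=0$ because $\partial_{i_s}^p=0$ in characteristic $p$ (Frobenius/Leibniz: the $p$-th derivative of any polynomial vanishes), so those terms contribute nothing. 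Hence only the multi-indices $I$ with $j\notin I$ survive, and I must show the corresponding coefficients $\partial_j\,\partial_I^{p-1}(a_I)$ cancel in pairs.

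Here is where closedness enters. From $d\omega=0$ I read off the coefficient of $dz_j\wedge dz_I$ (for a fixed multi-index $I$ not containing $j$): it is, up to a sign $\varepsilon$ depending only on the position of $j$ relative to $I$,
$$
\partial_j a_I \;+\; \sum_{i_s\in I} (-1)^{?}\,\partial_{i_s} a_{(I-i_s)+j} \;=\;0 .
$$
The point is that $a_I$ does not appear alone: the vanishing of this coefficient expresses $\partial_j a_I$ as a combination of $\partial_{i_s}$ of other coefficients $a_{(I-i_s)+j}$, each of which is indexed by a multi-index that does contain $j$. Now apply $\partial_I^{p-1}$ to this identity. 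On the left we get $\partial_I^{p-1}\partial_j a_I=\partial_j\partial_I^{p-1}a_I$, precisely the coefficient we want to show is zero. On the right, each term is $\partial_I^{p-1}\partial_{i_s}\!\big(a_{(I-i_s)+j}\big)$; since $i_s\in I$, the operator $\partial_I^{p-1}\partial_{i_s}$ contains the factor $\partial_{i_s}^p=0$, so every term on the right vanishes. Therefore $\partial_j\partial_I^{p-1}(a_I)=0$ for every $I$ with $j\notin I$, and combined with the first case this gives $\partial_j\phi(\omega)=0$.

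**Main obstacle.** The only delicate point is bookkeeping of signs and of which multi-indices occur when extracting the coefficient of $dz_j\wedge dz_I$ from $d\omega$: one must be careful that every coefficient appearing besides $a_I$ is attached to an index set containing $j$ (so that the $\partial_{i_s}^p=0$ trick applies), and one should treat separately, or absorb into the notation, the extremal cases where $j$ is smaller than all of $I$ or larger than all of $I$. Once the indexing is set up correctly — using the $I-i_s$ and $I+j$ notation introduced just above the lemma — the argument is a one-line application of $\partial^p=0$, so I would not belabor the sign computation beyond fixing conventions.
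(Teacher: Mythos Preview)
Your proposal is correct and follows essentially the same approach as the paper's proof: split into the cases $j\in I$ (where $\partial_j\partial_I^{p-1}=\partial_j^p\partial_{I-j}^{p-1}=0$) and $j\notin I$ (where closedness gives $\partial_j a_I$ as a signed sum of $\partial_i a_{I-i+j}$ with $i\in I$, and then applying $\partial_I^{p-1}$ kills every term via $\partial_i^p=0$). The paper handles the signs the same way you suggest, by simply writing $(-1)^{m_I(i,j)}$ for suitable integers without tracking them explicitly.
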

\begin{proof}
Since $\omega=\sum_I a_I dz_I$ is closed, we obtain, for $j\notin I$,
$$
\partial_j (a_I)=\sum_{i\in I} (-1)^{m_I(i,j)}\partial_i (a_{I-i+j})
$$
for suitable numbers $m_I(i,j)\in \mathbb N$. Therefore
$$
\partial_j \partial_I^{p-1}(a_I)=\partial_I\partial_j (a_I)=\sum_{i\in I} (-1)^{m_I(i,j)}\partial_I^{p-1}\partial_i (a_{I-i+j})=0
$$
If $j\in I$, then $\partial_i\partial_I^{p-1}(a_I)=\partial_j^p\partial_{I-j}^{p-1}(a_I)=0$.

\end{proof}

\begin{prop}\label{P:IrrationalPartClosed}
If $\omega=\sum_I a_I dz_I\in \Omega_{K(z)/K}^r$ is closed, then $\phi(\omega)$ is $p$-closed (hence exact) and $\omega_\mathcal{I}=\psi(\omega)$ is closed. More precisely, for every multi-index $I$, $\partial_I^{p-1}(a_I)\in K(z^p)$.
\end{prop}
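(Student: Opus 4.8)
The plan is to read the proposition off from the preceding Lemma together with the Poincaré Lemma (Theorem~\ref{T:Main}); I take $r\ge 1$ throughout. First I would establish the last assertion. Write $\phi(\omega)=\sum_I\partial_I^{p-1}(a_I)\,dz_I$. The Lemma, applied to the closed form $\omega$, gives $\partial_j\phi(\omega)=0$ for every $j=1,\dots,n$; since the $dz_I$ are $K(z)$-linearly independent, this forces $\partial_j\bigl(\partial_I^{p-1}(a_I)\bigr)=0$ for all $j$ and all $I$, i.e.\ $d\bigl(\partial_I^{p-1}(a_I)\bigr)=0$. By Proposition~\ref{preparation1}(ii) we conclude $\partial_I^{p-1}(a_I)\in K(z^p)$ for every $I$, which is the ``more precisely'' statement.

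Next I would deduce the rest formally. Because each coefficient $\partial_I^{p-1}(a_I)$ of $\phi(\omega)$ is a $\partial$-constant, $d\phi(\omega)=\sum_{I,j}\partial_j\bigl(\partial_I^{p-1}(a_I)\bigr)\,dz_j\wedge dz_I=0$, so $\phi(\omega)$ is closed; and for each ordered $I=(i_1<\dots<i_r)$ with $r\ge 1$, any nontrivial iterated derivative kills the $\partial$-constant $\partial_I^{p-1}(a_I)$, so in particular $\partial_I^{p-1}\bigl(\partial_I^{p-1}(a_I)\bigr)=0$. Both clauses of Definition~\ref{D:p-closed} hold, hence $\phi(\omega)$ is $p$-closed, and Theorem~\ref{T:Main} gives that it is exact. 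For $\omega_{\mathcal I}=\psi(\omega)=\sum_I\partial_I^{p-1}(a_I)\,z_I^{p-1}\,dz_I$: since $\partial_I^{p-1}(a_I)\in K(z^p)$ is annihilated by $d$, we have $d\bigl(\partial_I^{p-1}(a_I)\,z_I^{p-1}\,dz_I\bigr)=\partial_I^{p-1}(a_I)\,d(z_I^{p-1})\wedge dz_I$, and $d(z_I^{p-1})=\sum_{i_k\in I}\partial_{i_k}(z_I^{p-1})\,dz_{i_k}$ only involves the differentials $dz_{i_k}$ with $i_k\in I$, so $d(z_I^{p-1})\wedge dz_I=0$ termwise and therefore $d\psi(\omega)=0$.

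I do not expect a genuine obstacle: essentially all the work sits in the Lemma — where closedness of $\omega$ is used to rewrite $\partial_j a_I$ as a combination of the $\partial_i a_{I-i+j}$ with $i\in I$, each annihilated by $\partial_I^{p-1}$ because of the repeated factor $\partial_i^p$ — and in Theorem~\ref{T:Main}. The only points needing a little care are the linear-independence bookkeeping over $K(z)$ used to pass from $\partial_j\phi(\omega)=0$ to the vanishing of each $\partial_j\partial_I^{p-1}(a_I)$, and the elementary identity $d(z_I^{p-1})\wedge dz_I=0$.
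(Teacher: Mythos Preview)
Your proposal is correct and follows essentially the same route as the paper: invoke the preceding Lemma to get $\partial_j\partial_I^{p-1}(a_I)=0$ for all $j$ (hence $\partial_I^{p-1}(a_I)\in K(z^p)$ via Proposition~\ref{preparation1}(ii)), then read off closedness and $p$-closedness of $\phi(\omega)$ and closedness of $\psi(\omega)$ from the fact that the coefficients are $\partial$-constants. The only cosmetic difference is that the paper phrases the last step as $\partial_j(z_I^{p-1})=0$ for $j\notin I$, whereas you phrase it as $d(z_I^{p-1})\wedge dz_I=0$; these are the same observation.
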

\begin{proof}
Let $\omega=\sum_I a_I dz_I$ be a closed $r$-form. By the above lemma, $\partial_i\partial_I^{p-1}(a_I)=0$ for every $i=1$,..., $n$, which implies $\partial_I^{p-1}(a_I)\in K(z^p)$. Also $\phi(\omega)$ is closed because
$$
d\phi(\omega)=\sum_I\sum_{j\notin I}\partial_j \partial_I^{p-1}(a_I)dz_j\wedge dz_I=0
$$
and hence $\phi(\omega)$ is $p$-closed because
$$
\partial_I^{p-1} \partial_I^{p-1}(a_I)=0
$$
Finally $\omega_\mathcal{I}$ is closed because
$$
d\omega_\mathcal{I}=\sum_I\sum_{j\notin I}\partial_j(\partial_I^{p-1}(a_I)z_I^{p-1})dz_j\wedge dz_I=\sum_I\sum_{j\notin I}\partial_I^{p-1}(a_I)\partial_j (z_I^{p-1})dz_j\wedge dz_I=0
$$

\end{proof}

\begin{prop}
If $\omega=\sum_I a_I dz_I$ is closed, then $\omega_\mathcal{R}$ is $p$-closed (hence exact), while $\omega_\mathcal{I}$ is closed but it is $p$-closed only in the case $\omega_\mathcal{I}=0$. So a closed $r$-form is $p$-closed if and only if $\omega_\mathcal{I}=0$.
\end{prop}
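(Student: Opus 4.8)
The plan is to reduce the whole statement to one computation with the operator $\partial_I^{p-1}$ and then to invoke the Poincaré Lemma (Theorem \ref{T:Main}) for the two ``hence exact'' clauses. First I would observe that $\omega_\mathcal{R}$ is closed: $\omega$ is closed by hypothesis and $\omega_\mathcal{I}=\psi(\omega)$ is closed by Proposition \ref{P:IrrationalPartClosed}, hence so is $\omega_\mathcal{R}=\omega-\omega_\mathcal{I}$. Thus for each of $\omega_\mathcal{R}$ and $\omega_\mathcal{I}$ the first half of Definition \ref{D:p-closed} is automatic, and $p$-closedness reduces to evaluating $\partial_I^{p-1}$ on the $dz_I$-coefficient for every multi-index $I=(i_1<\cdots<i_r)$. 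Granting that $\omega_\mathcal{R}$ is $p$-closed, Theorem \ref{T:Main} yields that it is exact; and the last sentence then follows from the Remark that the $p$-closed $r$-forms form a $K(z^p)$-subspace of $\Omega^r_{K(z)/K}$: since $\omega=\omega_\mathcal{R}+\omega_\mathcal{I}$ with $\omega_\mathcal{R}$ always $p$-closed, $\omega$ is $p$-closed if and only if $\omega_\mathcal{I}=\omega-\omega_\mathcal{R}$ is $p$-closed, if and only if $\omega_\mathcal{I}=0$.

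The computational core is the following identity. Fix $I$. By Proposition \ref{P:IrrationalPartClosed}, $\partial_I^{p-1}(a_I)\in K(z^p)$, so every $\partial_j$ annihilates it; applying the Leibniz rule to $\partial_I^{p-1}=\partial_{i_1}^{p-1}\cdots\partial_{i_r}^{p-1}$ acting on the product $\partial_I^{p-1}(a_I)\cdot z_I^{p-1}$, where $z_I^{p-1}=z_{i_1}^{p-1}\cdots z_{i_r}^{p-1}$, only the term in which all the derivatives fall on $z_I^{p-1}$ survives, so that
$$
\partial_I^{p-1}\bigl(\partial_I^{p-1}(a_I)\,z_I^{p-1}\bigr)=\partial_I^{p-1}(a_I)\prod_{s=1}^{r}\partial_{i_s}^{p-1}\bigl(z_{i_s}^{p-1}\bigr)=\bigl((p-1)!\bigr)^{r}\,\partial_I^{p-1}(a_I),
$$
and $\bigl((p-1)!\bigr)^{r}=(-1)^{r}\neq 0$ in $K$ by Wilson's theorem. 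The only other ingredient is the identity $\partial_I^{2(p-1)}=0$ (valid because $2(p-1)\ge p$), already used in the proof of Proposition \ref{P:IrrationalPartClosed}.

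Feeding this into $\omega_\mathcal{I}$: its $dz_I$-coefficient is $\partial_I^{p-1}(a_I)z_I^{p-1}$, so $\partial_I^{p-1}$ of it equals $(-1)^{r}\partial_I^{p-1}(a_I)$, which vanishes precisely when $\partial_I^{p-1}(a_I)=0$, that is, precisely when that coefficient of $\omega_\mathcal{I}$ is already $0$; ranging over all $I$ we get that $\omega_\mathcal{I}$ is $p$-closed if and only if $\omega_\mathcal{I}=0$. Feeding it into $\omega_\mathcal{R}$: its $dz_I$-coefficient is $a_I-\partial_I^{p-1}(a_I)z_I^{p-1}$, and the identity above shows that $\partial_I^{p-1}$ of it is $0$; hence $\omega_\mathcal{R}$ is $p$-closed, and therefore exact by Theorem \ref{T:Main}. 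The one point that demands care — and the only real obstacle — is to match the scalar $\bigl((p-1)!\bigr)^{r}$ produced by $\partial_I^{p-1}$ acting on $z_I^{p-1}$ with the scalar implicit in the definition of $\psi$, so that the subtraction $\omega-\omega_\mathcal{I}$ cancels exactly the part of $\omega$ that fails to be $p$-closed; apart from this bookkeeping the argument is only repeated use of the Leibniz rule together with $\partial^{p}=0$.
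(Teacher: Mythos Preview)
Your approach is exactly the paper's: invoke Proposition~\ref{P:IrrationalPartClosed} to get $\partial_I^{p-1}(a_I)\in K(z^p)$, then evaluate $\partial_I^{p-1}$ on the $dz_I$-coefficients of $\omega_\mathcal{I}$ and $\omega_\mathcal{R}$ directly.

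However, the concern you flag at the end as mere ``bookkeeping'' is a genuine gap, and neither your write-up nor the paper's own proof handles it correctly. You compute, correctly, that $\partial_I^{p-1}(z_I^{p-1})=\bigl((p-1)!\bigr)^r=(-1)^r$; the paper tacitly treats this constant as $1$. With the definition of $\psi$ as written (no sign), the $dz_I$-coefficient of $\omega_\mathcal{R}$ is $a_I-\partial_I^{p-1}(a_I)\,z_I^{p-1}$, and applying $\partial_I^{p-1}$ yields
\[
\partial_I^{p-1}(a_I)-(-1)^r\,\partial_I^{p-1}(a_I)=\bigl(1-(-1)^r\bigr)\,\partial_I^{p-1}(a_I),
\]
which for odd $r$ and $p>2$ equals $2\,\partial_I^{p-1}(a_I)\neq 0$ in general. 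So your sentence ``the identity above shows that $\partial_I^{p-1}$ of it is $0$'' is false as stated. The fix is precisely what the paper does in its \emph{second} treatment (Definition~\ref{operator_P}): set $\omega_\mathcal{I}=\mathcal{Q}_r(\omega)=(-1)^r\sum_I \partial_I^{p-1}(a_I)\,z_I^{p-1}\,dz_I$ with the factor $(-1)^r$ inserted; then the subtraction cancels for every $r$. Your instinct that the scalar had to be matched was right---it is not bookkeeping but an actual correction to the definition of $\psi$.
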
 
\begin{proof}
By the above proposition, we have that
$$
\omega_{\mathcal{I}}=\sum_I \partial_I^{p-1}(a_I) z_I^{p-1}dz_I=\sum_I b_I(z^p)z_I^{p-1}dz_I
$$
is closed, with $\partial_I^{p-1}(b_I(z^p)z_I^{p-1})=b_I(z^p)\partial_I^{p-1}(z_I^{p-1})=b_I(z^p)$. Also
$$
\omega_{\mathcal{R}}=\sum_I (a_I-\partial_I^{p-1}(a_I) z_I^{p-1})dz_I=\sum_I (a_I-b_I(z^p)z_I^{p-1})dz_I
$$
is closed, with $\partial_I^{p-1}(a_I-b_I(z^p)z_I^{p-1})=\partial_I^{p-1}(a_I)-b_I(z^p)=b_I(z^p)-b_I(z^p)=0$. Therefore, $\omega_{\mathcal{R}}$ is $p$-closed. 

\end{proof}

\begin{thm}\label{T:Main2}
$\gamma_0: \Omega_{K(z)/K}^*\rightarrow H_{K(z)}^*$ is an isomorphism.

\end{thm}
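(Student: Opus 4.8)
The plan is to show that $\gamma_0$ is simultaneously injective and surjective by combining the decomposition of a closed form into rational and irrational parts with the Positive Characteristic Poincaré Lemma. First I would recall that $H_{K(z)}^* = Z_{K(z)}^*/B_{K(z)}^*$, the quotient of closed forms by exact forms, so a class in $H_{K(z)}^r$ is represented by some closed $r$-form $\beta = \sum_I c_I dz_I$. For surjectivity, I would first reduce to the case where $\beta$ is already in irrational form: by the previous proposition, $\beta = \beta_\mathcal{I} + \beta_\mathcal{R}$ with $\beta_\mathcal{R}$ $p$-closed, hence exact by Theorem \ref{T:Main}, so $\beta$ and $\beta_\mathcal{I}$ represent the same cohomology class. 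Now $\beta_\mathcal{I} = \sum_I b_I(z^p) z_I^{p-1} dz_I$ for suitable $b_I \in K(z)$ (with $b_I(z^p) = \partial_I^{p-1}(c_I) \in K(z^p)$ by Proposition \ref{P:IrrationalPartClosed}), and this is visibly $\gamma_0\big(\sum_I b_I(z) dz_I\big)$, so $\gamma_0$ is surjective.

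For injectivity, suppose $\gamma_0(\omega) = 0$ in $H_{K(z)}^r$ where $\omega = \sum_I a_I dz_I$; that means $\sum_I a_I(z^p) z_I^{p-1} dz_I$ is exact, say equal to $d\eta$. I would argue that this forces each $a_I = 0$. One approach: an exact form is in particular $p$-closed by the easy direction of Theorem \ref{T:Main}, so applying $\partial_I^{p-1}$ to the $dz_I$-coefficient $a_I(z^p) z_I^{p-1}$ gives $a_I(z^p) \partial_I^{p-1}(z_I^{p-1}) = a_I(z^p) = 0$ for every $I$ (using that $\partial_I^{p-1}$ kills the $z^p$-dependence of $a_I(z^p)$ and that $\partial_{i_s}^{p-1}(z_{i_s}^{p-1}) = (p-1)! \equiv -1 \bmod p$, with the signs from the $r$ factors combining to a unit). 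Since $a \mapsto a(z^p)$ is injective on $K(z)$, this yields $a_I = 0$ for all $I$, hence $\omega = 0$.

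The $K(z)$-linearity of $\gamma_0$ with respect to the twisted action via $F_0$ is essentially built into the definition and needs only a one-line check: multiplying $a_I(z) dz_I$ by $f(z)$ and then applying $\gamma_0$ gives $f(z^p) a_I(z^p) z_I^{p-1} dz_I$, which is $f \cdot_{F_0} \gamma_0(a_I dz_I)$. I expect the main obstacle to be the bookkeeping in the injectivity argument — specifically, justifying cleanly that $\gamma_0(\omega)$ exact forces $\omega = 0$ without circularity. The cleanest route is exactly the one above: use only the trivial implication "exact $\Rightarrow$ $p$-closed" (which does not depend on the harder half of the Poincaré Lemma), extract the vanishing of $\partial_I^{p-1}$ applied to each coefficient, and then invoke injectivity of the Frobenius substitution $a(z) \mapsto a(z^p)$ on $K(z)$. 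An alternative, if one prefers to avoid even that, is to note that a nonzero $\omega = \sum a_I dz_I$ written in the standard monomial basis of $K(z)$ over $K(z^p)$ produces in $\gamma_0(\omega) = \sum a_I(z^p) z_I^{p-1} dz_I$ a form whose coefficients lie in distinct basis directions $z_I^{p-1}$, and one checks directly that such a form is exact only when it is zero; but the $p$-closedness argument subsumes this.
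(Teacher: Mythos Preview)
Your proposal is correct and follows essentially the same approach as the paper: surjectivity via the decomposition $\beta=\beta_{\mathcal I}+\beta_{\mathcal R}$ together with Proposition~\ref{P:IrrationalPartClosed} and Theorem~\ref{T:Main}, and injectivity via ``exact $\Rightarrow$ $p$-closed'' applied to $\sum_I a_I(z^p)z_I^{p-1}dz_I$, yielding $(-1)^r a_I(z^p)=0$ and hence $a_I=0$. The paper's proof is the same argument written more tersely (it suppresses the explicit computation of $\partial_I^{p-1}(a_I(z^p)z_I^{p-1})$ and the appeal to Theorem~\ref{T:Main} in the surjectivity step), so your added detail is welcome rather than a divergence.
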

\begin{proof}
Let $\omega=\sum_I a_I dz_I\in Z^r$ be a closed $r$-form. By Proposition \ref{P:IrrationalPartClosed}, $\partial_I^{p-1}(a_I)\in K(z^p)$, then
$$
\omega=\gamma_0(\sum_I F_0^{-1}(\partial_I^{p-1}(a_I))dz_I)
$$
and $\gamma_0$ is surjective.

We also have
$$
\gamma_0(\omega)=\sum_I a_I(z^p)z_I^{p-1}dz_I \mod B^r
$$

If $\gamma_0(\omega)=0$, then $\sum_I a_I(z^p)z_I^{p-1}dz_I$ is $p$-closed, which implies $a_I=0$ for every $I$, so $\omega=0$ and hence $\gamma_0$ is injective.

\end{proof}


\section{Deduction of Poincaré's Lemma from the Inverse of Cartier Operator}\label{S:Cartier}

In this section we follows closely the Chapter $1$ of book \cite{Brion}.

Let $A$ be a $K$-algebra (in general, $A=K[z]$ or $A=K(z)$). We have the {\it derivation operator}
$$
d:\Omega_{A/K}^*\rightarrow \Omega_{A/K}^*
$$
$$
d(a_1da_2\wedge ... \wedge da_r)=da_1\wedge da_2 \wedge ... \wedge da_r
$$

In particular, we have the derivation $d:A\rightarrow \Omega_{A/K}^1$. We define de {\it ring of $\partial$-constants}
$$
A_0:=\{a\in A; d(a)=0\}
$$
Then obviously $K\subset A_0$. If $p$ is the characteristic of $K$, then
$$
A^p:=\{a^p; a\in A\} \subset A_0
$$
(where if $p=0$, then $A^p=K$.)

The $d$-operator is $A_0$-linear and satisfies $d^2=0$, hence we obtain a complex $(\Omega_A^*,d)$, called the {\it De Rham complex} of $A$ (over $K$).

\begin{lemma}[\cite{Brion}, 1.3.2 Lemma, p. 23]
Let $A$ be an algebra over an algebraically closed field $K$. The space $Z_A^*:=\{\alpha\in\Omega_{A/K}; d\alpha=0\}$ is a graded $A^p$-subalgebra of $\Omega_A^*$, and the space $B_A^*:=\{d\alpha; \alpha\in \Omega_{A/K}^{*-1}\}$ is a graded ideal of $Z_A^*$. Hence, the quotient space
$$
H_A^*:=Z_A^*/B_A^*
$$
is a graded-commutative $A^p$-algebra. If also $A$ is a localization of a finitely generated algebra, then the $A$-module $\Omega_{A/K}^*$ is finitely generated, and the $A^p$-modules $Z_A^*$, $B_A^*$ and $H_A^*$ are finitely generated as well.

\end{lemma}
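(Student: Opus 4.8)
The plan is to separate the two quite different parts of the statement: the purely formal algebra structure (that $Z_A^*$ is a graded $A^p$-subalgebra, $B_A^*$ a graded ideal, and $H_A^*$ a graded-commutative $A^p$-algebra), which follows from $d$ being a degree $+1$ anti-derivation with $d^2=0$ that is $A_0$-linear with $A^p\subseteq A_0$; and the finiteness assertions, which require honest commutative algebra. I would do the formal part first.

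For the algebra structure, I would start from the graded Leibniz rule $d(\alpha\wedge\beta)=d\alpha\wedge\beta+(-1)^{\deg\alpha}\,\alpha\wedge d\beta$, which is immediate from the defining formula $d(a_1\,da_2\wedge\cdots\wedge da_r)=da_1\wedge da_2\wedge\cdots\wedge da_r$. If $\alpha,\beta\in Z_A^*$ then $d(\alpha\wedge\beta)=0$, so $Z_A^*$ is closed under $\wedge$; since $d$ is $A^p$-linear and $1\in A^p\subseteq Z_A^0$, each homogeneous piece $Z_A^r=\ker(d\colon\Omega_{A/K}^r\to\Omega_{A/K}^{r+1})$ is an $A^p$-module, so $Z_A^*=\bigoplus_r Z_A^r$ is a graded $A^p$-subalgebra of $\Omega_A^*$. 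From $d^2=0$ we get $B_A^*\subseteq Z_A^*$, and each $B_A^r=\operatorname{Image}(d\colon\Omega_{A/K}^{r-1}\to\Omega_{A/K}^r)$ is an $A^p$-submodule; for $\eta\in\Omega_{A/K}^*$ and $\alpha\in Z_A^*$ the Leibniz rule gives $d(\eta\wedge\alpha)=d\eta\wedge\alpha$, so $B_A^*$ is a graded ideal (two-sided, by graded-commutativity of $\Omega_A^*$). Hence $H_A^*=Z_A^*/B_A^*$ carries the quotient grading and $A^p$-algebra structure, and is graded-commutative because $\Omega_A^*$ is.

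For the finiteness statements I would write $A=S^{-1}B$ with $B=K[z_1,\dots,z_m]/\mathfrak a$ finitely generated over $K$. Then $\Omega^1_{B/K}$ is generated over $B$ by the $dz_i$, so $\Omega^1_{A/K}=A\otimes_B\Omega^1_{B/K}$ is a finite $A$-module and therefore so is $\Omega^*_{A/K}=\bigwedge\nolimits_A^*\Omega^1_{A/K}$. Next, using that $K$ is perfect (hence $K^p=K$), inside $R:=K[z_1,\dots,z_m]$ the subring $R^p$ equals the polynomial ring $K[z_1^p,\dots,z_m^p]$, which is Noetherian, and $R$ is free over $R^p$ with basis the monomials $z^I$, $\overrightarrow{0}\le I<\overrightarrow{p}$ (compare Proposition \ref{preparation1}(i)). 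Passing to $B=R/\mathfrak a$ gives $B^p\cong R^p/(R^p\cap\mathfrak a)$, again Noetherian and with $B$ generated over $B^p$ by the images of the same finitely many monomials; since $p$-th powers commute with localization, $A^p=(S^p)^{-1}B^p$ is Noetherian and $A$ is a finite $A^p$-module. Composing, $\Omega^*_{A/K}$ is a finite $A^p$-module. Finally, $A^p$ being Noetherian, the $A^p$-submodule $Z_A^*$ of the finite $A^p$-module $\Omega^*_{A/K}$ is finite; $B_A^*$ is finite as an $A^p$-module quotient of $\Omega^{*-1}_{A/K}$; and $H_A^*=Z_A^*/B_A^*$ is finite as a quotient of $Z_A^*$.

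The hard part — or rather the only non-formal part — is the last paragraph. Note one cannot simply invoke that $A$ is Noetherian, because $d$ is only $A_0$-linear, so $Z_A^*$ is an $A^p$-submodule but in general not an $A$-submodule of $\Omega^*_{A/K}$; one genuinely needs $A^p$ Noetherian and $A$ finite over $A^p$. Both facts rest on $K$ being perfect, so that $K^p=K$ and $R^p$ is a finitely generated $K$-algebra — this is exactly where the hypothesis that $K$ is algebraically closed (or merely perfect) is used, and it is the reason such finiteness can fail over imperfect fields. The remaining bookkeeping with the localization $S^{-1}(-)$ is routine.
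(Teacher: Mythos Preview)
The paper does not give its own proof of this lemma: it is merely quoted from \cite{Brion} (Lemma~1.3.2) as background in Section~\ref{S:Cartier}, with no argument supplied. So there is nothing in the paper to compare your proof against.

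Your argument is correct and is essentially the standard one. The formal part (Leibniz rule $\Rightarrow$ $Z_A^*$ is a subalgebra and $B_A^*$ a graded ideal) is routine; the finiteness part is where the content lies, and you handle it properly: the key inputs are that $K^p=K$ (so $R^p=K[z_1^p,\dots,z_m^p]$ is a Noetherian polynomial ring and $R$ is finite free over it), that Frobenius is compatible with quotients and localizations, and that submodules of finite modules over a Noetherian ring are finite. Your observation that one cannot simply use Noetherianity of $A$ because $Z_A^*$ is only an $A^p$-submodule, not an $A$-submodule, is exactly the point. One small wording issue: in the identity $B^p\cong R^p/(R^p\cap\mathfrak a)$ it is cleaner to say that $B^p$ is the image of $R^p$ under $R\twoheadrightarrow B$, i.e.\ $(R^p+\mathfrak a)/\mathfrak a$, which is a quotient of $R^p$ and hence Noetherian; the rest goes through unchanged.
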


Consider the map $\gamma: A\rightarrow \Omega_{A/K}^1$, $\gamma(a)=a^{p-1}da$.

\begin{lemma}[\cite{Brion}, 1.3.3 Lemma, p. 24]
With the above notations, for all $a, b \in A$, we have:
\begin{enumerate}
\item $\gamma(ab)=a^p\gamma(b)+b^p\gamma(a)$.
\item $d\gamma(a)=0$.
\item $\gamma(a+b)-\gamma(a)-\gamma(b)\in B_A^1$.

\end{enumerate}
\end{lemma}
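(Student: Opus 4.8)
The plan is to verify the three identities directly from the definition $\gamma(a)=a^{p-1}\,da$, using only the Leibniz rule $d(ab)=a\,db+b\,da$, the relation $d^2=0$, and $da\wedge da=0$; parts (1) and (2) are immediate, and part (3) is the only one carrying real content. For (1) I would expand
$$
\gamma(ab)=(ab)^{p-1}\,d(ab)=a^{p-1}b^{p-1}(a\,db+b\,da)=a^p\bigl(b^{p-1}\,db\bigr)+b^p\bigl(a^{p-1}\,da\bigr),
$$
which is exactly $a^p\gamma(b)+b^p\gamma(a)$. For (2),
$$
d\gamma(a)=d\bigl(a^{p-1}\bigr)\wedge da+a^{p-1}\,d(da)=(p-1)a^{p-2}\,da\wedge da+0=0,
$$
since $da\wedge da=0$.

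For (3) the guiding principle is that $\gamma(a)=a^{p-1}\,da$ is, morally, $\tfrac1p\,d(a^p)$: although $p$ is not invertible in $A$, the rational numbers $c_k:=\tfrac1p\binom pk$ with $1\le k\le p-1$ are genuine integers, because $\binom pk=\tfrac pk\binom{p-1}{k-1}$ and $\gcd(p,k)=1$ force $p\mid\binom pk$. I would then set $\Theta:=\sum_{k=1}^{p-1}\bar c_k\,a^kb^{p-k}\in A$, where $\bar c_k\in K$ is the image of $c_k$, and check by the Leibniz rule that
$$
d\Theta=(a+b)^{p-1}(da+db)-a^{p-1}\,da-b^{p-1}\,db=\gamma(a+b)-\gamma(a)-\gamma(b).
$$
Concretely one expands $(a+b)^{p-1}=\sum_j\binom{p-1}{j}a^jb^{p-1-j}$ and matches the coefficients of $da$ and of $db$ on both sides; this comes down to the elementary relations $(j+1)c_{j+1}=\binom{p-1}{j}$, $j\,c_j=\binom{p-1}{j-1}$, and $\binom{p-1}{j}\equiv(-1)^j\pmod p$. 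It then follows that $\gamma(a+b)-\gamma(a)-\gamma(b)=d\Theta\in B_A^1$.

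The step I expect to be the main obstacle is the bookkeeping in (3): one needs the $da$-coefficients and the $db$-coefficients to match \emph{simultaneously}, i.e.\ that the guessed primitive $\Theta$ really has no obstruction. This is guaranteed a priori, which is why I would invoke a universal model: over $\mathbb Q[X,Y]$ the polynomial $\tfrac1p\bigl((X+Y)^p-X^p-Y^p\bigr)$ is an honest antiderivative of $(X+Y)^{p-1}\,d(X+Y)-X^{p-1}\,dX-Y^{p-1}\,dY$, all coefficients involved already lie in $\mathbb Z$, and $\Omega^1_{\mathbb Z[X,Y]/\mathbb Z}$ is torsion-free, so the identity holds over $\mathbb Z$ and then specializes to $A$ under the ring homomorphism $X\mapsto a$, $Y\mapsto b$ (which is compatible with $d$ and, since $\operatorname{char}K=p$, sends $p$ to $0$). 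Everything else is routine.
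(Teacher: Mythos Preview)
Your proof is correct. Note, however, that the paper does not actually prove this lemma: it is merely quoted from \cite{Brion} (Lemma~1.3.3) as background for the Cartier operator, with no argument given. Your approach---direct verification of (1) and (2), and for (3) exhibiting the explicit primitive $\Theta=\sum_{k=1}^{p-1}\tfrac1p\binom{p}{k}a^kb^{p-k}$, i.e.\ the reduction mod~$p$ of $\tfrac1p\bigl((a+b)^p-a^p-b^p\bigr)$---is exactly the standard one (and is the argument in \cite{Brion} as well). The universal-model remark over $\mathbb Z[X,Y]$ is a clean way to avoid the coefficient bookkeeping, though the direct check via $(j+1)c_{j+1}=\binom{p-1}{j}$ that you sketch is already enough.
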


Now consider the composition
$$
A\overset{\gamma}\longrightarrow Z_A^1\longrightarrow Z_A^1/B_A^1=H_A^1
$$
which is a $K$-derivation and $A$ acts on itself by multiplication, and on $H_A^1$ via the {it Frobenius endomorphism} $F:A\rightarrow A$, $F(a)=a^p$. This derivation will be also denoted by $\gamma$. The universal property of Kähler differentials yields an $A$-algebra homomorphism
$$
\gamma: \Omega_{A/K}^*\rightarrow H_A^*
$$
$$
a_1da_2\wedge ...\wedge da_r\mapsto a_1^pa_2^{p-1}... a_r^{p-1}da_2\wedge ... \wedge da_r \mod B_A^*
$$
where $A$ acts on $H_A^*$ via the Frobenius endomorphism.

\begin{thm}[\cite{Brion}, 1.3.4 Theorem, p. 24]
If $K$ is algebraically closed and $A$ is a regular ring, then $\gamma: \Omega_{A/K}^*\rightarrow H_A^*$ is an isomorphism.

\end{thm}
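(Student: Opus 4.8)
\emph{Plan.} The plan is to reduce to the polynomial case and then spread out by étale descent. That $\gamma$ is a well-defined homomorphism of graded $A^p$-algebras, the target $H_A^*$ being viewed as an $A$-algebra through the Frobenius $F$, is exactly the content of the two lemmas preceding the statement (the universal property of $\Omega_{A/K}^*$ applied to the $K$-derivation $a\mapsto a^{p-1}da$ into $H_A^1$). For $A=K[z_1,\dots,z_n]$ I would proceed by induction on $n$. When $n=1$ the de Rham complex is $0\to K[z_1]\xrightarrow{d}K[z_1]\,dz_1\to 0$, so $H^0_A=K[z_1^p]$ and $H^1_A$ is free over $K[z_1^p]$ on the class of $z_1^{p-1}dz_1$; since $K$ is perfect, $\gamma$ maps $K[z_1]=\Omega^0_A$ bijectively onto $H^0_A$ by $a\mapsto a^p$, and $\gamma(a\,dz_1)=a^p z_1^{p-1}dz_1$ identifies $\Omega^1_A$ with $H^1_A$, injectivity being the computation $\partial_1^{p-1}(a^pz_1^{p-1})=-a^p$, so that $\gamma(a\,dz_1)\in B^1_A$ forces $a=0$, just as in the proof of Theorem \ref{T:Main2}. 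For the inductive step, write $A=B\otimes_K D$ with $B=K[z_1,\dots,z_{n-1}]$ and $D=K[z_n]$, so that $(\Omega^*_{A/K},d)\cong(\Omega^*_{B/K},d)\otimes_K(\Omega^*_{D/K},d)$ as complexes of $K$-vector spaces; the Künneth formula over the field $K$ gives a canonical $H^*_A\cong H^*_B\otimes_K H^*_D$, and since $\gamma$ is additive and multiplicative and is pinned down by its values on $z_i$ and $dz_i$, it corresponds under this identification to $\gamma_B\otimes\gamma_D$, hence is an isomorphism by induction. The same argument works for $K(z)$ and recovers Theorem \ref{T:Main2}.

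\emph{General regular $A$.} Let $A$ be regular and a localization of a finitely generated $K$-algebra. The assertion is local on $\Spec A$: both $\Omega^*_{A/K}$ and $H^*_A$ are finite over $A^p$ by the finiteness lemma above, $\gamma$ is compatible with localization (inverting $f$ on the source corresponds to inverting $f^p$ on the target, which is the same localization), and a morphism of finite modules is an isomorphism iff it is one after localizing at every maximal ideal. Since $K$ is algebraically closed, $A$ is smooth over $K$, so after passing to a basic open we may choose $t_1,\dots,t_n\in A$ with $dt_1,\dots,dt_n$ an $A$-basis of $\Omega^1_{A/K}$ and $A$ étale over $C:=K[t_1,\dots,t_n]$. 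For such an étale extension one has $\Omega^1_{A/K}=A\otimes_C\Omega^1_{C/K}$, and both the de Rham complex with its cohomology and the map $\gamma$ are obtained from those of $C$ by the corresponding (flat, Frobenius-twisted) base change along $C\to A$; hence $\gamma_A$ is an isomorphism because $\gamma_C$ is, by the polynomial case.

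\emph{Main obstacle.} The delicate step is precisely this compatibility with étale base change: that for $A$ étale over $C$ the cohomology $H^*_A$ and the map $\gamma_A$ are correctly computed from $H^*_C$ and $\gamma_C$. The ingredients are $\Omega_{A/K}=A\otimes_C\Omega_{C/K}$ for étale $A/C$; the flatness of Frobenius over regular rings (Kunz), so that the base change is exact on the de Rham complex and commutes with passage to cohomology; and the bookkeeping that identifies $\gamma_A$ with the base change of $\gamma_C$. Carrying this out carefully in the essentially-of-finite-type algebraic setting is the one genuinely nontrivial point; everything else — the construction and multiplicativity of $\gamma$, and the $n=1$ and Künneth computations — is contained in the preceding lemmas or is routine.
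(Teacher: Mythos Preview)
The paper does not prove this theorem: it is quoted from \cite{Brion} (Theorem~1.3.4) and used as a black box in Section~\ref{S:Cartier} to rederive the positive-characteristic Poincar\'e Lemma. There is thus no in-paper argument to compare your proposal against.

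That said, your outline is essentially the standard proof and close to what Brion--Kumar actually do: an explicit computation for $A=K[z_1,\dots,z_n]$ (your induction-plus-K\"unneth is a tidy packaging of the monomial computation), followed by \'etale localization for general regular $A$. One technical correction in your ``main obstacle'' paragraph: the relevant Frobenius fact is not Kunz's theorem on flatness of the absolute Frobenius, but rather that for an \'etale extension $C\to A$ the relative Frobenius is an isomorphism, i.e.\ the square of absolute Frobenii is cartesian, so $A\cong A^p\otimes_{C^p}C$. This is what lets you rewrite
\[
\Omega^*_{A/K}\;=\;A\otimes_C\Omega^*_{C/K}\;\cong\;A^p\otimes_{C^p}\Omega^*_{C/K}
\]
as complexes (the differential being $C^p$-linear), and then flatness of the \'etale map $C^p\to A^p$ gives $H^*_A\cong A^p\otimes_{C^p}H^*_C$ with $\gamma_A$ corresponding to $\mathrm{id}\otimes\gamma_C$. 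Kunz's theorem is about the absolute Frobenius on $A$ and does not by itself provide this identification. Apart from this bookkeeping point, your sketch is sound.
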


The inverse of the isomorphism $\gamma$ on the above theorem is called the {\it Cartier Operator}. In particular, suppose $K$ algebraically closed and $A=K(z)=K(z_1,...,z_n)$. Then
$$
\gamma(a(z)dz_I)=a(z)^p\cdot z_I^{p-1}dz_I \mod B_A^{\mid I\mid}
$$
and every closed $r$-form $\omega$ can be write as
$$
\omega=\sum_I a_Idz_I=\sum_I b_I^p z_I^{p-1}dz_I+d\eta=\sum_I \left(b_I^p z_I^{p-1}+\sum_{i\in I}(-1)^{m(I,i)}\frac{\partial c_{(I-i)}}{\partial z_{i}}\right) dz_I
$$
for suitable numbers $m(I,i)\in \mathbb N$. Hence
$$
a_I=b_I^p z_I^{p-1}+\sum_{i\in I}(-1)^{m(I,i)}\frac{\partial c_{(I-i)}}{\partial z_{i}}
$$
so that
$$
\left( \frac{\partial^{\mid I\mid}}{\partial z_I}\right)^{p-1}(a_I)=b_I^p
$$
This imply that $\omega=d\eta$ if and only if $\omega$ is $p$-closed.

In the general case where $K$ is not necessarily algebraically closed, we can consider the algebraic closure $\bar{K}$ of $K$ and then we obtain an inclusion $K(z)\hookrightarrow \bar{K}(z)$, which enable us to make the same argument to conclude the $p$-closedness condition for exactness. Also the $K$-linear transformation
$$
\gamma_0: \Omega_{K(z)/K}^*\rightarrow H_{K(z)}^*
$$
$$
a(z)dz_{i_1}\wedge ...\wedge dz_{i_r}\mapsto a(z^p)z_{i_1}^{p-1}... z_{i_r}^{p-1}dz_{i_1}\wedge ...\wedge dz_{i_r} \mod B_{K(z)}^r
$$
where $a(z^p):=a(z_1^p,...,z_n^p)$, is an isomorphism of $K(z)$-vector spaces, if $K(z)$ acts on $H_{K(z)}^*$ via $F_0:K(z)\rightarrow K(z^p)$, $a(z)\mapsto a(z^p)$.

\section{Rational and irrational parts  of   closed forms}

In this section we describe how to split a polynomial or rational closed $ r$-form into a $p$-closed and a non $p$-closed part, and how these parts   generate the  algebraic de Rham $r$-cohomology over the ring $R$ of the polynomials $K[z]$ or over the field $ K(z)$.

In this section we denote by $z=(z_1,\cdots ,z_n) \in K^n$  and
$$
(z_i\partial_i)^{p-1}\eta:= z_i^{p-1}\partial_i^{p-1} \eta 
 $$ 
 
The following  proposition presented by Sweedler \cite{Sweedler} is a particular case of the splitting in the case of an 1-form.

\begin{prop}\label{prop2.1}
Let $ K$ be a field of characteristic  $ p>0$ and $ \omega= \sum_{i=1}^n a_i(z) dz_i$ be a polynomial or rational $1$-form on $K^n$. Suppose that $ \omega$ is closed    $ $ and define  the decomposition $ \omega = \omega_{\mathcal R} + \omega_{\mathcal I}$ such that
 $$ \omega_I = \sum_{i=1}^n b_i dz_i , \qquad  b_i =(-1) (z_i\partial_ i)^{p-1} a_i , \qquad  \omega_{\mathcal R} = \omega - \omega_{\mathcal I}$$
Then this decomposition  is such that  
\begin{itemize}
\item[a)]$ \omega_{\mathcal R}$ is  $ p$-closed (hence exact) and is called the {\bf rational part} of the integral of $\omega$.

\item[b)]$\omega_{\mathcal I}$ is closed and $ \omega_{\mathcal I}=  \sum_{i=1}^n (a_i z_i^p) \frac{dz_i}{z_i}$ is called the {\bf irrational part} of the integral of $ \omega$.
\end{itemize}
\end{prop}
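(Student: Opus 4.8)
\emph{Proof strategy.} The proposition is the $r=1$ instance of the decomposition studied in Section \ref{S:Decomposition}, so the plan is to reduce everything to one structural fact about closed $1$-forms and then verify (a) and (b) by short direct computations. First I would record that $\omega_{\mathcal R}$ and $\omega_{\mathcal I}$ do lie in $\Omega^1_{R/K}$: each coefficient $b_i=-(z_i\partial_i)^{p-1}a_i=-z_i^{p-1}\partial_i^{p-1}(a_i)$ is obtained from $a_i\in R$ by differentiation and multiplication, hence $b_i\in R$. The key input is that, $\omega$ being closed, Proposition \ref{P:IrrationalPartClosed} applied with $r=1$ gives $\beta_i:=\partial_i^{p-1}(a_i)\in K(z^p)$ for every $i$ (internally: closedness gives $\partial_j a_i=\partial_i a_j$, so $\partial_j\partial_i^{p-1}(a_i)=\partial_i^{p-1}\partial_i a_j=\partial_i^p(a_j)=0$ for $j\neq i$ and $\partial_i\partial_i^{p-1}(a_i)=\partial_i^p(a_i)=0$, so $\beta_i$ is killed by every $\partial_j$, hence is a $\partial$-constant by Proposition \ref{preparation1}(ii)). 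Thus $b_i=-z_i^{p-1}\beta_i$ with $\beta_i\in K(z^p)$, and this is all that will be used.

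For (b), to see that $\omega_{\mathcal I}=\sum_i b_i\,dz_i$ is closed I would expand $d\omega_{\mathcal I}=\sum_i\sum_{j\neq i}\partial_j(b_i)\,dz_j\wedge dz_i$ and note that for $j\neq i$ one has $\partial_j(b_i)=-z_i^{p-1}\partial_j(\beta_i)=0$, since $\partial_j(z_i^{p-1})=0$ and $\beta_i\in K(z^p)\subseteq\ker\partial_j$; hence $d\omega_{\mathcal I}=0$. The logarithmic presentation is then a formal rewriting in $\Omega^1_{K(z)/K}$: $b_i\,dz_i=-z_i^{p-1}\beta_i\,dz_i=(-z_i^p\beta_i)\,\frac{dz_i}{z_i}$, and the coefficient $-z_i^p\beta_i=b_i z_i$ lies in $K(z^p)$ because both $z_i^p$ and $\beta_i=\partial_i^{p-1}(a_i)$ do; this is the claimed identity $\omega_{\mathcal I}=\sum_i(a_iz_i^p)\frac{dz_i}{z_i}$, exhibiting $\omega_{\mathcal I}$ as a $K(z^p)$-linear combination of the forms $\frac{dz_i}{z_i}$. (That this part carries the ``irrational'' content — it is exact only when it is $0$ — follows from the last Proposition of Section \ref{S:Decomposition} together with Theorem \ref{T:Main}.)

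For (a), $\omega_{\mathcal R}=\omega-\omega_{\mathcal I}$ is a difference of closed forms, hence closed; it remains to check the $p$-closedness condition $\partial_i^{p-1}(a_i-b_i)=0$ for each $i$. Since $\beta_i$ is a $\partial$-constant it pulls out of $\partial_i^{p-1}$, so $\partial_i^{p-1}(b_i)=-\beta_i\,\partial_i^{p-1}(z_i^{p-1})=-\beta_i\,(p-1)!=\beta_i$ by Wilson's congruence $(p-1)!\equiv-1\pmod p$, whence $\partial_i^{p-1}(a_i-b_i)=\beta_i-\beta_i=0$. Thus $\omega_{\mathcal R}$ is $p$-closed, and Theorem \ref{T:Main} then gives that it is exact.

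The only step with real content is the first one: the fact $\partial_i^{p-1}(a_i)\in K(z^p)$ genuinely uses closedness of $\omega$ and is precisely Proposition \ref{P:IrrationalPartClosed}; everything after that is mechanical, the single arithmetical point being $(p-1)!\equiv-1\pmod p$, which is exactly what makes the cancellation in (a) come out to $0$. In this way the proposition is recovered as the $r=1$ case of the machinery of Sections \ref{S:Main} and \ref{S:Decomposition}, and coincides with Sweedler's original decomposition of a closed $1$-form.
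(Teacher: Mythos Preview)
Your proof is correct and follows essentially the same route as the paper: both use closedness plus $\partial_i^p=0$ to see that the mixed derivatives of $b_j$ vanish (you phrase this as $\beta_i\in K(z^p)$, the paper computes $\partial_i b_j=0$ directly), and both use Wilson's congruence $(p-1)!\equiv -1$ to get $\partial_i^{p-1}(a_i-b_i)=0$. The only substantive difference is in deducing exactness of $\omega_{\mathcal R}$: you invoke Theorem~\ref{T:Main}, whereas the paper, after noting that this is an option, instead gives Sweedler's direct argument---clear denominators by a $\partial$-constant, observe each $c_i$ is free of $z_i^{p-1}$ so $\int c_i\,dz_i$ makes sense, and set $\gamma=\frac{1}{n}\sum_i\int c_i\,dz_i$---making the proposition self-contained rather than dependent on the general Poincar\'e Lemma.
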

 
\begin{proof}
As  $ \omega$ is closed and if $ i \neq j = 1,2,...,n$ then $  \partial_i a_j- \partial_j a_i=0$ and 
$$
(-1)\partial_i b_j= \partial_i ( z_j\partial_j)^{p-1} a_j= ( z_j\partial_j)^{p-1} (\partial_i a_j) =( z_j\partial_j)^{p-1} (\partial_j a_i)=
z_j (\partial_j)^p a_i=0
$$
and then  $ \partial_i b_j= \partial_j b_i=0$, that is,   $ \omega_I$ is closed. As $ \omega $ is closed then $ \omega_R=\omega-\omega_I$ is also closed. As  $(p-1)!\equiv (-1)$ mod $p$ and $ \partial_i^p=0$ therefore
    $$\partial_i^{p-1} b_i=(-1) \partial_i^{p-1}( z_i^{p-1} \partial_i^{p-1}a_i)= 
    (-1)(p-1)!\partial_i^{p-1}a_i=\partial_i^{p-1}a_i$$ 
 and then 
 $$\partial_i^{p-1} (\omega_R)_i=\partial_i^{p-1} (\omega- \omega_I)_i= \partial_i^{p-1}a_i -\partial_i^{p-1}b_i =\partial_i^{p-1}a_i -\partial_i^{p-1}a_i=0 $$ 
 By  definition \ref{D:p-closed} $ \omega_R$ is $ p$-closed and one could conclude applying the Poincaré Lemma \ref{T:Main3} that  $ \omega$  is exact but we will follow the reasoning sketched in \cite{Sweedler}.
 
If $ \omega_R \in \Omega_{K(z)/K}$  is rational $ r$-form  then, by \ref{preparation1}, there is a  $ \partial$-constant $ q^p$ such that    $ q^p \omega$ is a polynomial $ r$-form  and one can suppose that $ \omega_R=\sum_{i=1}^n c_i$ is a polynomial 1-form and $ c_i$ is free of $ z_i^{p-1}$ for each $ i=1,\cdots ,n$. As $ \omega_R$ is closed $ \partial_j c_i=\partial_i c_j $ and then $ \partial_j\int  c_i dz_i= c_j$  and then $\gamma = (1/n)\sum_{i=1}^n \int c_i dz_i$ is such that $  \partial_i\gamma= c_i$  for $ i=1, \cdots ,n$ and then $ d \gamma= \omega$, that is, $\omega_R$ is exact.  
\end{proof}

\begin{definition}\label{operator_P}
Let $ K $ be a field of characteristic $p>0$, $ r$  an integer $ 1\leq r \leq n $, $J=\{i_1<...<i_r\}$ an ordered subset of $ \{1,2,..., n\}$ and $\mathcal{S}$ be the set of all ordered  subsets  of $ \{1,2,..., n\}$ with $r$ elements. If $ \omega \in \Omega^r_{R/K}$ is an algebraic $r$-form and $\omega=\sum_J a_J dz_J$, where $ J \in \mathcal{S}$, then we define:

\begin{itemize}
 \item[\textbf{a)}] The operator $\mathcal{P}_J:R\rightarrow R $
$$
\mathcal{P}_J := \prod_{i \in J} z_i^{p-1} \partial_i^{p-1}=(z_{i_1}\cdots z_{i_r})^{p-1} \partial_{i_1}^{p-1}\cdots \partial_{i_1}^{p-1}
$$
With this notation  $\mathcal P_{(i)}=\mathcal{P}_ i=z_i^{p-1}\partial_i^{p-1} $, 
  $ \mathcal{P}_J=\prod_{i \in J} \mathcal{P}_i$ and $ \mathcal{P}_i\mathcal{P}_j=\mathcal{P}_j\mathcal{P}_i$.
 
 \item[\textbf{b)}] The operator $ \mathcal{Q}_r : \Omega^r_{R/K(z)} \rightarrow \Omega^r_{R/K(z)}$ such that, for $\omega=\sum_{J} a_J dz_J$,  
$$
\mathcal{Q}_r(\omega)= (-1)^r \sum_{J \in \mathcal{S}} \mathcal{P}_J (a_J) dz_J
$$ 

 \item[\textbf{c)}] The decomposition $ \omega = \omega_\mathcal{R}+\omega_\mathcal{I}$ such that  $ \omega_\mathcal{I}=\mathcal{Q}_r(\omega)$  is the {\it irrational part} and $ \omega_\mathcal{R}= \omega - \omega_\mathcal{I}$ is the {\it rational part} of the $ r$-form $ \omega$.
  \end{itemize}
 
\end{definition}

\begin{prop}\label{properties of P}
The operators $ \mathcal{P}_J$ and $\mathcal Q_r$ have the following properties.

\begin{itemize}
\item[(i)] If $ i \in \{ 1,2, \cdots,n\}$, then $ \mathcal{P}_i \mathcal{P}_i= -\mathcal{P}_i$ and 
$\mathcal{P}_{\{i<j\}}=\mathcal{P}_i\mathcal{P}_j$.

\item[(ii)] If $ J \in \mathcal{S} $ and $ i\in J$, then $ \mathcal{P}_i\partial_i=0 $ and $\mathcal{P}_i \mathcal{P}_J=-\mathcal{P}_J$ .

\item[(iii)] If $ J \in \mathcal{S} $ and $|J|=r $, then  $ \mathcal{P}_J\mathcal{P}_J=(-1)^{r}\mathcal{P}_J$ and $ \mathcal{Q}_r\mathcal{Q}_r= \mathcal{Q}_r$ .

\item[(iv)] $\mathcal{Q}_r(\omega_\mathcal{R})=0 $ and if $ \omega$ is closed then $ \omega_\mathcal{I}$ is closed   and $ \omega_\mathcal{R}$ is  $p$-closed. 

\end{itemize}

\end{prop}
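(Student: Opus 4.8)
The plan is to isolate identity (i) as the one genuinely computational ingredient and then deduce (ii), (iii), and (iv) from it by formal manipulation. The relation $\mathcal{P}_{\{i<j\}}=\mathcal{P}_i\mathcal{P}_j$ is read straight off the definition $\mathcal{P}_J=\prod_{i\in J}z_i^{p-1}\partial_i^{p-1}$ once one uses that $z_j$ and $\partial_i$ commute for $i\neq j$; this, together with the pairwise commutativity $\mathcal{P}_i\mathcal{P}_j=\mathcal{P}_j\mathcal{P}_i$, is already recorded in Definition~\ref{operator_P}, so I shall use both freely.

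For the substantive half of (i), namely $\mathcal{P}_i\mathcal{P}_i=-\mathcal{P}_i$, I would apply $\mathcal{P}_i$ twice to an arbitrary element $f$, set $g:=\partial_i^{p-1}(f)$, and reduce to evaluating $\partial_i^{p-1}(z_i^{p-1}g)$. Expanding by the Leibniz rule, and using that $g$ is itself of the form $\partial_i^{p-1}(\cdot)$ — so $\partial_i^{m}(g)=0$ for every $m\geq 1$, because $\partial_i^{p}=0$ in characteristic $p$ — only the highest term $\binom{p-1}{p-1}\,\partial_i^{p-1}(z_i^{p-1})\,g=(p-1)!\,g$ survives; Wilson's congruence $(p-1)!\equiv-1\pmod p$ then gives $\mathcal{P}_i\mathcal{P}_i(f)=-z_i^{p-1}g=-\mathcal{P}_i(f)$. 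This small Leibniz computation is the only non-formal step of the whole proposition, and is where I expect the (mild) difficulty to lie.

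Granting (i), the rest is bookkeeping. For (ii): $\mathcal{P}_i\partial_i=z_i^{p-1}\partial_i^{p-1}\partial_i=z_i^{p-1}\partial_i^{p}=0$, while for $i\in J$ one writes $\mathcal{P}_J=\mathcal{P}_i\prod_{j\in J\setminus\{i\}}\mathcal{P}_j$ and applies $\mathcal{P}_i^{2}=-\mathcal{P}_i$ to get $\mathcal{P}_i\mathcal{P}_J=-\mathcal{P}_J$. For (iii): since the $\mathcal{P}_i$ pairwise commute, $\mathcal{P}_J\mathcal{P}_J=\prod_{i\in J}\mathcal{P}_i^{2}=(-1)^{|J|}\prod_{i\in J}\mathcal{P}_i=(-1)^{r}\mathcal{P}_J$; a short coefficientwise computation in $\mathcal{Q}_r(\omega)=(-1)^{r}\sum_J\mathcal{P}_J(a_J)\,dz_J$ then gives $\mathcal{Q}_r\mathcal{Q}_r=\mathcal{Q}_r$, the accumulated sign factors collapsing to $(-1)^{3r}=(-1)^{r}$.

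For (iv), the vanishing $\mathcal{Q}_r(\omega_\mathcal{R})=\mathcal{Q}_r(\omega)-\mathcal{Q}_r(\mathcal{Q}_r(\omega))=0$ is immediate from additivity of $\mathcal{Q}_r$ and the idempotence just obtained. Now let $\omega=\sum_J a_J\,dz_J$ be closed. By Proposition~\ref{P:IrrationalPartClosed}, each coefficient $\partial_J^{p-1}(a_J)$ is a $\partial$-constant, so in $\omega_\mathcal{I}=\mathcal{Q}_r(\omega)=(-1)^{r}\sum_J\partial_J^{p-1}(a_J)\,z_J^{p-1}\,dz_J$ the coefficient of $dz_J$ is a $\partial$-constant times a monomial in the variables indexed by $J$ alone; for $j\notin J$ the operator $\partial_j$ annihilates it, and for $j\in J$ the form $dz_j\wedge dz_J$ vanishes, so $d\omega_\mathcal{I}=0$. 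Hence $\omega_\mathcal{R}=\omega-\omega_\mathcal{I}$ is closed, and $\mathcal{Q}_r(\omega_\mathcal{R})=0$ forces $z_J^{p-1}\partial_J^{p-1}((\omega_\mathcal{R})_J)=0$, hence $\partial_J^{p-1}((\omega_\mathcal{R})_J)=0$ for every $J$ since $R$ is a domain; by Definition~\ref{D:p-closed} this says $\omega_\mathcal{R}$ is $p$-closed (and thus exact, by Theorem~\ref{T:Main}). The one thing to watch throughout is the sign bookkeeping — in particular, the convention in $\mathcal{Q}_r$ differs by the factor $(-1)^{r}$ from that of the operator $\psi$ of Section~\ref{S:Decomposition} — but this is harmless, since closedness is insensitive to nonzero scalars and the $p$-closedness of $\omega_\mathcal{R}$ is verified directly above.
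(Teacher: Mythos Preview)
Your argument is correct throughout. Parts (i)--(iii) match the paper's proof essentially line for line: the paper also reduces (i) to $\partial_i^{p-1}(z_i^{p-1}g)=(p-1)!\,g=-g$ via Wilson, derives (ii) from $\partial_i^p=0$ together with (i), and obtains (iii) by iterating $\mathcal{P}_i\mathcal{P}_J=-\mathcal{P}_J$ across $i\in J$.

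The one place you diverge is in (iv), for the closedness of $\omega_\mathcal{I}$. You invoke Proposition~\ref{P:IrrationalPartClosed} from Section~\ref{S:Decomposition} to get $\partial_J^{p-1}(a_J)\in K(z^p)$ directly, whence $d\omega_\mathcal{I}=0$ is immediate. The paper instead re-derives this fact within Section~6: it writes out the closedness relation $(-1)^{k-1}\partial_{i_k}a_{I(i_k)}=\sum_{j\neq k}(-1)^j\partial_{i_j}a_{I(i_j)}$ and then computes $d\omega_\mathcal{I}$ termwise, killing each summand with the identity $\mathcal{P}_J\partial_j=0$ for $j\in J$. Your shortcut is cleaner and entirely legitimate, since Proposition~\ref{P:IrrationalPartClosed} precedes this proposition in the paper and your explicit remark about the sign discrepancy $\mathcal{Q}_r=(-1)^r\psi$ correctly disposes of the only possible subtlety. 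The paper's longer route has the minor virtue of making Section~6 self-contained in the $\mathcal{P}_J$, $\mathcal{Q}_r$ notation, but the content is the same.
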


\begin{proof}
As $ (p-1)!\equiv -1\mod  p$ and $ \partial_i^q=0$ if $ q \geq p$, then 
\item[(i)]
\[\begin{split}
\mathcal{P}_i\mathcal{P}_i&=z_i^{p-1}\partial_i^{p-1}z_i^{p-1}\partial_i^{p-1}\\
&=z_i^{p-1}(-1)\partial_i^{p-1}\\
&=-\mathcal{P}_i
\end{split}\]

If $i<j$, we have
\[\begin{split}
\mathcal{P}_i\mathcal{P}_j&=z_i^{p-1}\partial_i^{p-1}z_j^{p-1}\partial_j^{p-1}\\
&=z_i^{p-1}z_j^{p-1}\partial_i^{p-1}\partial_j^{p-1}\\
&=\mathcal{P}_{\{i<j\}}
\end{split}\]

      
  \item[(ii)] Without loss of generality one can suppose that $ J=( 1,2,...,r)$ and $ i=r$, then 
$$
\mathcal{P}_J\partial_r = z_1^{p-1} \partial_1^p...z_r^{p-1} \partial_r^{p-1}\partial_r=z_1^{p-1} \partial_1^p...z_r^{p-1} \partial_r^{p}=0
$$
and, now supposing $i=1$,
\[\begin{split}
\mathcal{P}_1\mathcal{P}_J&=z_1^{p-1}\partial_1^{p-1}\mathcal{P}_J\\
&=z_1^{p-1}\partial_1^{p-1}\prod_{i \in J} \mathcal{P}_i\\
&=z_1^{p-1}\partial_1^{p-1}z_1^{p-1}\partial_1^{p-1}z_2^{p-2}\partial_2^{p-1}...z_r^{p-1}\partial_r^{p-1}\\
&=z_1^{p-1}\partial_1^{p-1}(z_1^{p-1})\partial_1^{p-1}z_2^{p-2}\partial_2^{p-1}...z_r^{p-1}\partial_r^{p-1}+z_1^{2(p-1)}\partial_1^{2(p-1)}z_2^{p-2}\partial_2^{p-1}...z_r^{p-1}\partial_r^{p-1}\\
&=z_1^{p-1}(-1)\partial_1^{p-1}z_2^{p-2}\partial_2^{p-1}...z_r^{p-1}\partial_r^{p-1}+0\\
&=-\mathcal{P}_J
\end{split}\]

 
  
\item[(iii)] As $ \partial_i \partial_j= \partial_j \partial_i$, then 
$$
\mathcal{P}_J \mathcal{P}_J= ( \prod_{i\in J}\mathcal{P}_i )\mathcal{P}_J=(-1)^{|J|}\mathcal{P}_J
$$ 
and
\[\begin{split}
\mathcal{Q}_r \mathcal{Q}_r(\omega)&=(-1)^{2r}\sum_{J \in \mathcal{S}} (\mathcal{P}_J)^2 (a_J) dz_J\\
&=(-1)^{|J|}\sum_{J \in \mathcal{S} } \mathcal{P}_J (a_J) dz_J\\
&=(-1)^{r}\sum_{J \in \mathcal{S} } \mathcal{P}_J (a_J) dz_J\\
&=\mathcal{Q}_r (\omega)
\end{split}\]

 
\item[(iv)] $\mathcal{Q}_r (\omega_\mathcal{R})=\mathcal{Q}_r (\omega) - \mathcal{Q}_r (\omega_\mathcal{I})=\mathcal{Q}_r (\omega) - \mathcal{Q}_r^2(\omega)=\mathcal{Q}_r (\omega)-\mathcal{Q}_r(\omega)=0$
  
Let $ \mathcal{S}_r$ be all the ordered subsets of size $r$ of $ \{ 1, 2,...,n\}$ and $\mathcal{S}_{r+1}$ be all the ordered subsets of size $r+1$ of $ \{ 1, 2,...,n\}$. If $ \omega=\sum_{J \in \mathcal{S}_r } a_J dz_J$, then $d \omega= \sum_{I \in \mathcal{S}_{r+1}} b_I dz_I $. If $I=\{ i_1,i_2,\cdots,i_{r+1} \} $  and $ I(i_k)= \{ i_1,i_2, \cdots , \widehat{i_k},\cdots ,i_{r+1} \}$, then 
$$
b_I= \sum _{k=1}^{r+1} (-1)^{k-1}\partial_{i_k} a_{I(i_k)}
$$
If $ d\omega=0$,  then  $ b_I=0$ and, for each $ k=1,..., r+1$, one can isolate $ a_{I(i_{k})}$ on the left hand of $ b_I=0$ such that 
  $$  (-1)^{k-1}\partial_{i_k} a_{I(i_k)}= \sum_{j=1,j \neq k}^{r+1} (-1)^j \partial_{i_j}a_{I(i_j)} \qquad (*)$$
Note that $ \{ I(i_k): k=1,...,r+1 , J\in \mathcal{S}_{r+1} \} = \mathcal{S}_r$, that is, each $ I(i_k)$ is such that  $I(i_k)=J$ for some $J \in \mathcal{S}_r$ and reciprocally. Therefore
\[\begin{split}
d(\omega_\mathcal{I})&=d(\sum_{J \in \mathcal{I}} \mathcal{P}_J (a_J) dz_J )\\
&=\sum_{J \in \mathcal{I}} \sum_{i\notin \mathcal{J}} \mathcal{P}_J \partial_i(a_J) dz_i\wedge dz_J \\
&=\sum_{J \in \mathcal{I}} \sum_{i\notin \mathcal{J}} \sum_{j\in \mathcal{J}}(-1)^{R_{ij}}\mathcal{P}_J \partial_j(a_{(J-\{j\})\cup \{i\}}) dz_i\wedge dz_J \\
&=0
\end{split}\]
and then $ \omega_\mathcal{I}$ is closed and   $ \omega_\mathcal{R}= \omega - \omega_\mathcal{I}$ is also closed.  
  
If we suppose that $ \omega_\mathcal{R} =\sum_{J \in \mathcal{I}} c_J dx_J $ is not $ p$-closed, then there is some $ J \in \mathcal{I}$ such that $ \mathcal{P}_J c_I \neq 0$, and then $\mathcal{Q}_r(\omega_\mathcal{R})\neq 0$, which is a contradiction.
\end{proof}


We summarize the previous discussions about $ p$-closedness of an $ r-$form on $ K^n$

\begin{thm}\label{equivalence}
If  $\omega$ is an $r$-form and $\omega\in \Omega^r_{R/K}$, then the following conditions are equivalent
\begin{itemize}
\item[(a)] $\omega$ is $ p$-closed;
\item[(b)] $\omega$ is  $ p$-decomposable;
\item[(c)] $\omega$ is exact;
\item[(d)] $\omega$ is closed and $\mathcal{Q}_r( \omega)=0$. 
\end{itemize}
\end{thm}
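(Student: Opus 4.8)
The statement collects four characterizations of the same property, so the natural strategy is to close a cycle of implications rather than prove each equivalence separately. The cleanest route is $(c)\Rightarrow(a)\Rightarrow(b)\Rightarrow(c)$ together with $(a)\Leftrightarrow(d)$, since most of these one-way implications are either already in the excerpt or nearly immediate. First I would recall that Theorem~\ref{T:Main} (the Positive Characteristic Poincaré Lemma) gives $(a)\Leftrightarrow(c)$ outright, so in principle only the equivalence of $(b)$ and $(d)$ with $(a)$ remains; but for a self-contained summary it is worth spelling out the short arguments.

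For $(a)\Leftrightarrow(d)$: if $\omega=\sum_J a_J\,dz_J$ is closed, Proposition~\ref{properties of P}(iv) shows that $\mathcal{Q}_r(\omega)=\omega_{\mathcal I}$ is closed and $\omega_{\mathcal R}=\omega-\omega_{\mathcal I}$ is $p$-closed; moreover by the last paragraph of that proof $\omega_{\mathcal R}$ fails to be $p$-closed exactly when $\mathcal{Q}_r(\omega_{\mathcal R})\neq 0$, and since $\mathcal{Q}_r(\omega_{\mathcal R})=0$ always, the obstruction to $\omega$ being $p$-closed sits entirely in $\omega_{\mathcal I}$. Concretely $\partial_J^{p-1}(\mathcal{P}_J(a_J))=(-1)^{?}\mathcal{P}_J(a_J)\cdot(\text{unit})$ up to the $z_J^{p-1}$ factor, so $\omega_{\mathcal I}=0$ iff $\mathcal{P}_J(a_J)=0$ for all $J$ iff $\partial_J^{p-1}(a_J)=0$ for all $J$ (using $\partial_i^{p-1}(z_i^{p-1}f)=(p-1)!\,\partial_i^{p-1}f\equiv-\partial_i^{p-1}f$), which is precisely the definition of $p$-closedness for the closed form $\omega$. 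Hence $\omega$ is $p$-closed $\iff\mathcal{Q}_r(\omega)=0$ (with $\omega$ closed), giving $(a)\Leftrightarrow(d)$.

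For $(a)\Leftrightarrow(b)$: I would argue that $p$-decomposability (Definition~\ref{D:p-decomposable}) and $p$-closedness describe the same class by induction on $n$ and $r$, using the decomposition theorem proved just after Definition~\ref{D:p-decomposable}. That theorem states that $\omega$ is $p$-closed iff, for every $i$, one has $\omega=z_i^{p-1}dz_i\wedge\omega_i+dz_i\wedge\eta_i+\tau_i$ with $\omega_i$, $z_i^{p-1}dz_i\wedge\omega_i$, $dz_i\wedge\eta_i+\tau_i$ all $p$-closed, $\partial_i^{p-1}(\eta_i)=0$, and $\tau_i$ free of $dz_i$ --- this is word-for-word the recursive clause defining $p$-decomposability, once one checks the base case $r=0$ matches (both say $\omega\in K$). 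So an induction on $(n,r)$ transfers "$p$-closed" into "$p$-decomposable" and back through that theorem; the only thing to verify is that the forms appearing in the decomposition have strictly smaller complexity, which holds because $\omega_i$ has degree $r-1$ in $dz$ and $\tau_i$, $\eta_i$ live in one fewer variable after passing to $\Omega_{K(z_i^p)[\dots]/K(z_i^p)}$, exactly as in the proof of Theorem~\ref{T:Main}.

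Finally $(c)\Rightarrow(a)$ is the easy ($\Rightarrow$) direction of Theorem~\ref{T:Main} (if $\omega=d\eta$ then each coefficient $a_I$ is a sum of terms $\partial_{i_k}b_{I(k)}$, so $\partial_{i_k}^{p-1}\partial_{i_k}=\partial_{i_k}^p=0$ kills $\partial_I^{p-1}(a_I)$), and $(a)\Rightarrow(c)$ is the substantive ($\Leftarrow$) direction of that same theorem. The main obstacle is none of these individual steps --- each is short given the earlier results --- but rather bookkeeping: making sure the notion of "$p$-decomposable" in Definition~\ref{D:p-decomposable} really is syntactically identical to the conclusion of the decomposition theorem (the excerpt states them with cosmetically different groupings, e.g. $dz_i\wedge(z_i^{p-1}\omega_i+\eta_i)$ versus $z_i^{p-1}dz_i\wedge\omega_i+dz_i\wedge\eta_i$), so that the induction genuinely closes. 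I would therefore open the proof by observing that these two descriptions coincide termwise, cite Theorem~\ref{T:Main} for $(a)\Leftrightarrow(c)$, the decomposition theorem for $(a)\Leftrightarrow(b)$, and Proposition~\ref{properties of P}(iv) together with the unit computation above for $(a)\Leftrightarrow(d)$, and leave the rest to the reader.
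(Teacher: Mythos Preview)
Your proposal is correct and matches the paper's approach: the paper offers no separate proof of Theorem~\ref{equivalence} at all, introducing it merely with the sentence ``We summarize the previous discussions about $p$-closedness of an $r$-form on $K^n$'', so the intended argument is exactly the collection of citations you give --- Theorem~\ref{T:Main} for $(a)\Leftrightarrow(c)$, the decomposition theorem immediately following Definition~\ref{D:p-decomposable} for $(a)\Leftrightarrow(b)$, and Proposition~\ref{properties of P}(iv) for $(a)\Leftrightarrow(d)$.

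One small simplification: your route to $(a)\Leftrightarrow(d)$ through the rational/irrational splitting is more elaborate than needed. Since $\mathcal{P}_J(a_J)=z_J^{\,p-1}\partial_J^{\,p-1}(a_J)$ and $R$ is an integral domain, one has $\mathcal{Q}_r(\omega)=0$ if and only if $\partial_J^{\,p-1}(a_J)=0$ for every $J$; together with closedness this is literally Definition~\ref{D:p-closed}, so $(a)\Leftrightarrow(d)$ is immediate from the definitions without invoking Proposition~\ref{properties of P}. (The paper also records this equivalence in Section~\ref{S:Decomposition}, where the proposition after Proposition~\ref{P:IrrationalPartClosed} states that a closed $r$-form is $p$-closed iff $\omega_{\mathcal I}=0$, and $\omega_{\mathcal I}=(-1)^{r}\mathcal{Q}_r(\omega)$ up to notation.)
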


From Proposition \ref{properties of P} ( see \cite{Warner}) follows the

 \begin{thm} \label{deRham}
  Let $ K $ be a field of characteristic $p>0$ and  $ R$  be the ring of polynomial $ K(z)$ or the field of rational functions $ K(z)$,  $ \omega$ be a closed polynomial or rational  $ r$-form  with  $ 1\leq r \leq n $. Then 
  \begin{itemize}  
  
 \item[a)] The algebraic de Rham cohomology $ H^r(K^n,R)$ over the ring $R$ is generated by the class of the irrational forms $ \omega_I$ with  $\omega \in \Omega^r(K^n) $ and the chain sequence defined by exterior derivation $ d_{r-1} : \Omega^{r-1}(K^n)\rightarrow  \Omega^r(K^n) $ is not exact.
 
 \item[b)] The algebraic rational de Rham  cohomology  defined by the  $ p$-closed $ r$-forms $ \Omega_p^r$ is such that $ H_p^r(K^n,R)=0$ and the chain sequence defined by exterior derivation  on closed and  $ p$-closed $ r$-forms  
 $ d_{r-1} : \Omega_p^{r-1}(K^n)\rightarrow  \Omega_p^r(K^n) $ is exact. 
\end{itemize}  
   
 \end{thm}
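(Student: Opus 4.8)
The plan is to derive Theorem~\ref{deRham} from the Poincaré Lemma (Theorem~\ref{T:Main}) and the rational/irrational decomposition of Proposition~\ref{properties of P}, adding only one explicit example to witness non-triviality of the cohomology in degree~$r$; in effect this theorem is a repackaging of those two results.

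For part~(a), I would take an arbitrary closed $r$-form $\omega\in\Omega^r_{R/K}$. By Proposition~\ref{properties of P}(iv) it splits as $\omega=\omega_{\mathcal R}+\omega_{\mathcal I}$ with $\omega_{\mathcal R}$ $p$-closed and $\omega_{\mathcal I}=\mathcal Q_r(\omega)$ closed and fixed by $\mathcal Q_r$, and by Theorem~\ref{T:Main} the $p$-closed form $\omega_{\mathcal R}$ is exact. Hence, writing $H^r(K^n,R)=Z^r/B^r$, one has $[\omega]=[\omega_{\mathcal I}]$, so every de Rham class is represented by an irrational form; this is the first assertion. To see that $d_{r-1}\colon\Omega^{r-1}(K^n)\to\Omega^r(K^n)$ is not exact, i.e.\ that $H^r(K^n,R)\ne 0$, it suffices to exhibit one closed, non-exact $r$-form, and I would use $\omega=(z_1\cdots z_r)^{p-1}\,dz_1\wedge\cdots\wedge dz_r$ (legitimate since $1\le r\le n$). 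It is closed: in $d\omega=\sum_{j}\partial_j\big((z_1\cdots z_r)^{p-1}\big)\,dz_j\wedge dz_1\wedge\cdots\wedge dz_r$ the terms with $j\le r$ vanish because $dz_j$ is repeated and those with $j>r$ vanish because the coefficient does not involve $z_j$. However $\partial_1^{p-1}\cdots\partial_r^{p-1}\big((z_1\cdots z_r)^{p-1}\big)=((p-1)!)^r\equiv(-1)^r\ne 0$ in $K$ by Wilson's theorem, so $\omega$ is not $p$-closed and hence, by Theorem~\ref{T:Main}, not exact; equivalently $\mathcal Q_r(\omega)=\omega$, so $[\omega]$ is a nonzero irrational class. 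This finishes~(a).

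For part~(b), the statement records the ``$p$-closed $\Rightarrow$ exact'' half of Theorem~\ref{T:Main}. The $p$-closed $r$-forms form a $K(z^p)$-subspace $\Omega_p^r$ of $\Omega^r(K^n)$, and all of its elements are exact, i.e.\ $\Omega_p^r\subseteq B^r$; consequently the $p$-closed cohomology $H^r_p(K^n,R):=\Omega_p^r/(B^r\cap\Omega_p^r)=\Omega_p^r/\Omega_p^r=0$. Equivalently, the sequence $\Omega^{r-1}(K^n)\xrightarrow{d_{r-1}}\Omega_p^r(K^n)\to 0$ is exact at $\Omega_p^r(K^n)$, which is precisely the surjectivity of $d_{r-1}$ onto the space of $p$-closed $r$-forms; the ``$\text{exact}\Rightarrow p\text{-closed}$'' half of Theorem~\ref{T:Main} is what guarantees $d(\Omega^{r-1})\subseteq\Omega_p^r$, so that this sequence makes sense.

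I do not expect a serious obstacle, since all the hard work is already done in Theorem~\ref{T:Main} and Proposition~\ref{properties of P}. The only points needing care are making the cohomological statements precise — spelling out $H^r(K^n,R)$ and $H^r_p(K^n,R)$ as the quotients $Z^r/B^r$ and $\Omega_p^r/(B^r\cap\Omega_p^r)$, and noting via Proposition~\ref{preparation1}(iii) that one may reduce the polynomial case to the rational case (or vice versa) so the answer is independent of whether $R=K[z]$ or $R=K(z)$ — and checking that the exhibited monomial form is simultaneously closed and non-exact, which is the short computation carried out above.
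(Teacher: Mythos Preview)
Your proposal is correct and follows essentially the same route as the paper: the paper's own proof is a one-line reference, ``From Proposition~\ref{properties of P} (see \cite{Warner}) follows the\ldots'', so you are simply fleshing out that citation by invoking the rational/irrational decomposition and Theorem~\ref{T:Main} exactly as intended. Your explicit monomial witness $(z_1\cdots z_r)^{p-1}\,dz_1\wedge\cdots\wedge dz_r$ for non-triviality is a detail the paper leaves implicit (though the $r=1$ case $z_i^{p-1}dz_i$ appears earlier in Section~\ref{S:Main}), and your computation of $\partial_I^{p-1}$ on it is correct.
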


\section{Completely integrable  closed r-forms }
 
 A polynomial $ p$-closed $ r$-form in $ K^n$ may present some powers $ x_i^{p-1}$ and in this case it is not possible to integrate $ \int z_i^{p-1} dz_i$ . In this  section we  define an operator  that splits a polynomial  $ r$-form  $ \omega$ in $ K^n$ into two parts such that one of them is free of powers $ z_i^{p-1}$ and we prove that if $ \omega$ is closed then these parts of the decomposition are also closed. 

\begin{definition}\label{operator_O}
With the same hypothesis  and notations of definition \ref{operator_P} we define 
\begin{itemize}
 \item[\textbf{a)}] the operator   $ \mathcal{O}_J : K[z] \rightarrow K[z]$
$$ \mathcal{O}_J= \prod_{i \in J} (1- (z_i\partial_i)^{p-1}) -1;$$
 where $ 1=id$ denotes the identity operator. With this notation  $\mathcal{O}_{\{ i\}}=(z_i\partial_i)^{p-1} $
 
 \item[\textbf{b)}] the operator  $ \mathcal{O}_r : \Omega^r(K^n) \rightarrow \Omega^r(K^n)$    such that  
$$ \mathcal{O}_r(\omega)= (-1) \sum_{J \in \mathcal{I}} \mathcal{O}_J a_J dz_J;$$ 

 \item[\textbf{c)}]the decomposition $ \omega = \omega_C+\omega_T$ such that 
  $ \omega_T=\mathcal{O}_r(\omega)$  is the \textit{ restrict} part, that is, with integrability restriction in at least one variable,   and $ \omega_C= \omega - \omega_T$ is the \textit{completely integrable} part of the $ r$-form $ \omega$.
  \end{itemize}
 
\end{definition}

\begin{example}
 \begin{itemize}
 \item[a)] 
 If $ r=2$,  $ n=3 $ and $ \omega = a_{12}dz_1\wedge dz_2+a_{13}dz_1\wedge dz_3 +a_{23}dz_2\wedge dz_3$ 
  then the restrict part of $ \omega_T$ is  given by
 $ \omega_T=b_{12}dz_1\wedge dz_2+b_{13}dx_1\wedge dz_3 +b_{23}dz_2\wedge dz_3 $ such that 
$$ (-1) b_{ij}= (z_i\partial_i)^{p-1} a_{ij}+ (z_j\partial_j)^{p-1} a_{ij}+
 (z_i\partial_i)^{p-1} (z_j\partial_j)^{p-1} a_{ij}$$
 
  \item[b)]
 If $ r=3$, $ n=3 $ and $ \omega = a_{123}dz_1\wedge dz_2 \wedge dz_3 $ 
 then the restrict part of $ \omega$ is given by
 $ \omega_T= b_{123} dz_1\wedge dz_2\wedge dz_3$ such that
 $$ (-1) b_{123}= \{  (z_1\partial_1)^{p-1}+ (z_2\partial_2)^{p-1}+ (z_3\partial_3)^{p-1}+$$
     $$(z_1 z_2\partial_1 \partial_2)^{p-1}+ (z_1 z_3\partial_1 \partial_3)^{p-1}+
     (z_2 z_3\partial_2 \partial_3)^{p-1}+$$
     $$(z_1 z_2 z_3\partial_1 \partial_2 \partial_3)^{p-1}     
  \} a_{123}$$ 
In this case $ (-1)  b_{123}= \{\prod_{i=1}^3 (1- (z_i\partial_i)^{p-1})-1\} a_{123}$.
\end{itemize}
\end{example}

The following two lemmas are a preparation to  Proposition \ref{decomp}.
 
\begin{lemma}\label{propriedades}
In the conditions and notations of the above definition  :
\begin{itemize}
\item[\textbf{a)}] If $ i \in I$ then
 $$ \partial_i^{p-1}\mathcal{O}_I= (-1)\partial_i^{p-1} ;$$
 
 \item[\textbf{b)}] If $ k=1,\cdots ,n$ then
 $$  \partial_k \mathcal{O}_J=  \mathcal{O}_J \partial_k $$
 
 \item[\textbf{c)}] 
 $$ \mathcal{O}_J \mathcal{O}_J =-\mathcal{O}_J  \quad \mathrm{and} \quad 
 \mathcal{O}_r \mathcal{O}_r =-\mathcal{O}_r  $$ 
 
  \item[\textbf{d)}] If we denote $ \mathcal{O}_{\{i\}}= \mathcal{O}_i$ and if $ \mathcal{J}$ denotes all non void subsets of $J= \{ 1,2, \cdots ,r \}$ and $ J_q$ is the set of all sets of $\mathcal{J}$   with $ q$ elements then
  $$ \mathcal{O}_J= \sum_{q=1}^r \sum_{ \quad\{i_1,\cdots,i_q \} \in J_q} \mathcal{O}_{i_1} \cdots \mathcal{O}_{i_q}  $$
and we call the above expression as the expanded polynomial form of  the operator $ \mathcal{O}_J$
  
\end{itemize}  
 
 \end{lemma}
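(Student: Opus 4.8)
The plan is to reduce all four assertions to two facts already in hand from Proposition~\ref{properties of P} --- that the operators $\mathcal{P}_i:=(z_i\partial_i)^{p-1}=z_i^{p-1}\partial_i^{p-1}$ commute pairwise and satisfy $\mathcal{P}_i\mathcal{P}_i=-\mathcal{P}_i$ --- together with the elementary remark that $\partial_k$ and $\mathcal{P}_j$ commute whenever $k\neq j$, since then the two operators involve disjoint sets of variables. Recall from Definition~\ref{operator_O}(a) that $\mathcal{O}_{\{i\}}=\mathcal{P}_i$, so $\mathcal{O}_J$ is a polynomial, with constant term, in the commuting operators $\mathcal{P}_i$ ($i\in J$). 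I would prove the items in the order (d), (b), (a), (c), since (a) and (c) fall out most cleanly from the expanded and factored description of $\mathcal{O}_J$ furnished by (d). The one step that is not purely mechanical is the identity $\partial_i^{p-1}\mathcal{P}_i=-\partial_i^{p-1}$: in the Leibniz expansion of $\partial_i^{p-1}\bigl(z_i^{p-1}\partial_i^{p-1}f\bigr)$ every term except the one in which all $p-1$ derivatives fall on $z_i^{p-1}$ applies at least $p$ powers of $\partial_i$ to $f$ and hence vanishes, leaving $\bigl(\partial_i^{p-1}z_i^{p-1}\bigr)\partial_i^{p-1}f=(p-1)!\,\partial_i^{p-1}f=-\partial_i^{p-1}f$; equivalently, one cancels the injective left multiplication by $z_i^{p-1}$ in $\mathcal{P}_i\mathcal{P}_i=-\mathcal{P}_i$. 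This identity --- equivalently $\mathcal{P}_i\mathcal{P}_i=-\mathcal{P}_i$ itself --- is the real content of the lemma; everything else is bookkeeping.

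\emph{Items (d) and (b).} For (d): since the $\mathcal{P}_i$ ($i\in J$) commute, the product in Definition~\ref{operator_O}(a) expands, after discarding the constant term $1$, into the sum over all non-empty subsets $S\subseteq J$ of $\prod_{i\in S}\mathcal{P}_i$; grouping these by $|S|=q$ and recalling $\mathcal{O}_{\{i\}}=\mathcal{P}_i$ gives precisely the expanded form asserted in (d), and equivalently $\mathcal{O}_J+1=\prod_{i\in J}(1+\mathcal{O}_i)$, a composition of pairwise commuting operators --- the form I use below. For (b): it suffices, and is all that is needed in the sequel (where $\mathcal{O}_J$ is applied to a $dz_J$-coefficient and one then differentiates in a direction outside $J$), to treat $k\notin J$. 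Then $k\neq j$ for every $j\in J$, so $\partial_k$ commutes with each $\mathcal{O}_j=\mathcal{P}_j$, hence with any polynomial in these operators, hence with $\mathcal{O}_J$; thus $\partial_k\mathcal{O}_J=\mathcal{O}_J\partial_k$.

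\emph{Items (a) and (c).} For (a): fix $i\in I$ and, using (d) together with the commutativity of the $\mathcal{O}_j=\mathcal{P}_j$, write $\mathcal{O}_I+1=(1+\mathcal{O}_i)\prod_{j\in I\setminus\{i\}}(1+\mathcal{O}_j)$ with the factor $1+\mathcal{O}_i$ on the left. By the key identity, $\partial_i^{p-1}(1+\mathcal{O}_i)=\partial_i^{p-1}+\partial_i^{p-1}\mathcal{P}_i=\partial_i^{p-1}-\partial_i^{p-1}=0$, so composing with $\partial_i^{p-1}$ on the left annihilates $\mathcal{O}_I+1$; hence $\partial_i^{p-1}\mathcal{O}_I+\partial_i^{p-1}=0$, i.e. $\partial_i^{p-1}\mathcal{O}_I=-\partial_i^{p-1}$. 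For (c): $\mathcal{P}_i\mathcal{P}_i=-\mathcal{P}_i$ gives $\mathcal{O}_i^2=-\mathcal{O}_i$, whence $(1+\mathcal{O}_i)^2=1+2\mathcal{O}_i+\mathcal{O}_i^2=1+\mathcal{O}_i$, so each $1+\mathcal{O}_i$ is idempotent. A composition of pairwise commuting idempotents is idempotent, so by (d) the operator $\mathcal{O}_J+1=\prod_{i\in J}(1+\mathcal{O}_i)$ satisfies $(\mathcal{O}_J+1)^2=\mathcal{O}_J+1$; expanding this gives $\mathcal{O}_J\mathcal{O}_J=-\mathcal{O}_J$, whence the stated relation for $\mathcal{O}_r$ follows by applying this coefficient-by-coefficient to a form $\omega=\sum_J a_J\,dz_J$. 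Beyond the Leibniz identity flagged in the first paragraph I foresee no genuine obstacle: once $\mathcal{P}_i\mathcal{P}_i=-\mathcal{P}_i$ and pairwise commutativity are granted, the lemma is formal.
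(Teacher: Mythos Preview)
Your argument is correct and rests on the same two ingredients the paper uses: the identity $\partial_i^{p-1}\mathcal{P}_i=-\partial_i^{p-1}$ (equivalently $\mathcal{P}_i^2=-\mathcal{P}_i$) and the pairwise commutativity of the $\mathcal{P}_j$, from which everything else is indeed bookkeeping. The organization differs slightly: for (a) the paper argues by induction on $|I|$, peeling off one factor $(1-(z_k\partial_k)^{p-1})$ with $k\neq i$ at a time and invoking the inductive hypothesis on $\mathcal{O}_{I\setminus\{k\}}$, whereas you factor out $(1+\mathcal{O}_i)$ in one stroke and kill it directly with $\partial_i^{p-1}$; your route is shorter but equivalent in spirit. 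For (c) and (d) the two arguments coincide. Your treatment of (b) is in fact more careful than the paper's: the paper asserts commutation for all $k$, but as you implicitly recognize, $\partial_k$ and $\mathcal{P}_k$ do \emph{not} commute (e.g.\ $\mathcal{P}_k\partial_k=0$ while $\partial_k\mathcal{P}_k\neq 0$ in general), so only the case $k\notin J$ is valid --- and, as you note, that is the only case used downstream.
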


\begin{proof}

\textbf{a)} The proof follows by induction on the number $ r$ . If $r=1$ the assertion is true as  in the proof of \ref{prop2.1},
that is, if $ \omega= \sum_{i=1}^n a_i(z) dz_i$ then  for any $ i=1,\cdots ,n$
$$ \partial_i ^{p-1} \mathcal{O}_{\{i\}} a_i=\partial_i^{p-1} b_i= (-1)\partial_i^{p-1}a_i$$

Suppose, by induction hypothesis, that if $ r>1$ and $J$ is any ordered subset of $ \{1, \cdots ,n \}$ with $ r-1$ elements
 then  for any $ j \in J$
 $$ \partial_j^{p-1}\mathcal{O}_J= (-1)\partial_j^{p-1} .$$
 If $ I$ has $ r$ elements and $ i \in I$ then  for any $ k \in I $ and $ k \neq i$

 $$
 \partial_i^{p-1}\mathcal{O}_I =\partial_i^{p-1}) \{ (1-(z_k \partial_k)^{p-1} 
   \left( \prod_{j \in I-k} (1- (z_j\partial_j)^{p-1}) -1+1\right)  -1  \} = 
 $$
$$  
    (1-(z_k \partial_k)^{p-1})\quad  \partial_i^{p-1} \left(\mathcal{O}_{I-k} +1 \right)- \partial_i^{p-1}
$$  
by induction hypothesis of 
$$ 
  \partial_i^{p-1}\mathcal{O}_I=(1-(z_k \partial_k)^{p-1}) (\quad(-1)\partial_i^{p-1} +\partial_i^{p-1})- \partial_i^{p-1}= (-1) \partial_i^{p-1}
 $$
   
   \textbf{b)} As all the the partial derivation commutes then 
    $  \partial_k \mathcal{O}_J=  \mathcal{O}_J \partial_k $.
   
   \textbf{c)} As in the proof of item a)  
   $$\mathcal{O}_{\{i\}} \mathcal{O}_\{i\}=( z_i \partial_i) ^{p-1} \mathcal{O}_{\{i\}}=(-1)( z_i \partial_i) ^{p-1}=(-1)\mathcal{O}_{\{i\}}$$
   then  $$ \left((1+(z_i \partial_i)^{p-1})\right)^2=(1+\mathcal{O}_{\{i\}})^2=(1+\mathcal{O}_{\{i\}}) $$
   by the definition of $ \mathcal{O}_J $
   $$  \mathcal{O}_J ^2= \left(  \prod_{i \in J} (1-\mathcal{O}_{\{i\} })  -1  \right)^2=(-1)\mathcal{O}_J $$
   and by the definition of $ \mathcal{O}_r$ we have that $\mathcal{O}_r \mathcal{O}_r=(-1)\mathcal{O}_r$.
   
   \textbf{d)} If $\mathcal{O}_i=  (x_i \partial_i)^{p-1}$ then expanding the product in the definition of $ O_{J} $ we have
   $$ \mathcal{O}_J= \prod_{i \in J} (1- \mathcal{O}_{\{i\}}) -1=
   \sum_{q=1}^r \sum_{ \quad\{i_1,\cdots,i_q \} \in J_q} \mathcal{O}_{i_1} \cdots \mathcal{O}_{i_q} $$
   
\end{proof}

 Let $ K $ be a field of characteristic $p>0$, $ \omega$ a polynomial or rational $r$-form in $ \Omega^r(K^n)$ and $ 1\leq r < n $.
  If $ J=\{j_1<j_2< \cdots <j_r\}$ is an ordered subset of $\{1,2,\cdots ,n\} $ and $\mathcal{I}$ is the set of all ordered subsets  
  of $ \{1,2, \cdots , n\}$ then $ \omega=\sum_{J \in\mathcal{I} } a_J dx_J$. If $\mathcal{G}$ is the set o all ordered subsets of 
  $\{1,2,\cdots ,n\} $  with $ r+1$ elements,  $ G=\{ i_1<i_2< \cdots <i_{r+1} \}$, 
   $ dx_G=dx_{i_1}\wedge \cdots \wedge dx_{i_{r+1}}$, $ G(k)= i_1,\cdots \widehat{i_k} \cdots i_{r+1}$,    $ \sigma_k =( i_k, i_1,\cdots \widehat{i_k},\cdots i_{r+1})$ is a permutation and  $ sign(\sigma)=k-1 $ and  $ d\omega$ is written as 
 $$
 d\omega= \sum_{G \in \mathcal{G} }\left( \sum_{k=1}^{r+1} (-1)^{k-1 }\partial_{i_k} a_{G(k)} \right) dz_G 
 $$ 
 \begin{lemma}\label{notacao}
\begin{itemize} 
 \item[\textbf{a)}] For every $  i=1,2,\cdots ,n$  \qquad $ \mathcal{O}_i \partial_i=0 $  and 
  $$  \mathcal{O}_{ i_1,\cdots {i_k} \cdots i_{r+1} } \partial_{i_k}=\mathcal{O}_{ i_1,\cdots \widehat{i_k} \cdots i_{r+1}} \partial_{i_k}$$. 
 \item[\textbf{b)}] If $ \omega=0 $ then for every $G \in \mathcal{G} $  then  \qquad $\sum_{k=1}^{r+1} (-1)^{k-1 }\partial_k a_{G(k)}  =0$ and for each
  $1 \leq k \leq r+1 $
 $$(-1)^{k-1}\partial_k a_{G(k)}= \sum_{i \neq k, i=1} ^{r+1}  (-1)^{i}\partial_{j_i} a_{G(j_i)} \qquad (*)$$ . 
  \item[\textbf{c)}] If the the  irrational part of $\omega$ is  $ \omega _I = \sum_{J \in \mathcal{I}}  \mathcal{O}_J a_J dz_J$ then
$$ 
d (\omega_I)= \sum_{G \in \mathcal{G}} b_{G} dz_G =\sum_{G \in \mathcal{G}} \left( \sum_{k=1}^{r+1} \mathcal{O}_{G(k)} (-1)^{k-1 }\partial_{i_k} a_{G(k)} \right) dz_G \qquad (**) 
$$ .  
     \end{itemize} 
 
\end{lemma}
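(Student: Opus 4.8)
The plan is to handle the three items in order, each reducing to the vanishing $\partial_i^p=0$ in characteristic $p$ together with the commutation and idempotency facts for the operators $\mathcal{O}_J$ recorded in Lemma \ref{propriedades}.

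For \textbf{a)}, the first identity is immediate from $\mathcal{O}_i=(z_i\partial_i)^{p-1}=z_i^{p-1}\partial_i^{p-1}$, since $\mathcal{O}_i\partial_i=z_i^{p-1}\partial_i^{p}=0$. For the second identity I would expand $\mathcal{O}_{i_1,\cdots,i_{r+1}}$ via the expanded polynomial form of part \textbf{d)} of Lemma \ref{propriedades}, namely as the sum over all nonempty subsets $\{i_{s_1},\cdots,i_{s_q}\}$ of $\{i_1,\cdots,i_{r+1}\}$ of the products $\mathcal{O}_{i_{s_1}}\cdots\mathcal{O}_{i_{s_q}}$. The single-variable operators $\mathcal{O}_i$ pairwise commute (they are built from commuting partial derivatives), so in any summand containing the factor $\mathcal{O}_{i_k}$ one can slide this factor to the far right, where it meets $\partial_{i_k}$ and, by the first identity, annihilates the whole summand. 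What survives after right multiplication by $\partial_{i_k}$ is precisely the sum over nonempty subsets of $\{i_1,\cdots,\widehat{i_k},\cdots,i_{r+1}\}$, i.e.\ the expanded form of $\mathcal{O}_{i_1,\cdots,\widehat{i_k},\cdots,i_{r+1}}$, composed with $\partial_{i_k}$; this is the asserted equality.

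Item \textbf{b)} is just unraveling the formula for $d\omega$ displayed immediately before the lemma: the forms $\{dz_G : G\in\mathcal{G}\}$ are $K$-linearly independent in $\Omega^{r+1}(K^n)$, so $d\omega=0$ forces each coefficient $\sum_{k=1}^{r+1}(-1)^{k-1}\partial_{i_k}a_{G(k)}$ to vanish; moving the $k$-th summand to one side and the remaining ones to the other gives $(*)$. For item \textbf{c)} I would compute directly, using $\partial_i\mathcal{O}_J=\mathcal{O}_J\partial_i$ from part \textbf{b)} of Lemma \ref{propriedades}:
$$
d(\omega_I)=\sum_{J\in\mathcal{I}}\sum_{i\notin J}\partial_i(\mathcal{O}_J a_J)\,dz_i\wedge dz_J=\sum_{J\in\mathcal{I}}\sum_{i\notin J}\mathcal{O}_J\,\partial_i(a_J)\,dz_i\wedge dz_J.
$$
Now reindex the double sum: a pair $(J,i)$ with $i\notin J$ is the same datum as a set $G=J\cup\{i\}\in\mathcal{G}$ together with the position $k$ of $i$ inside $G$, so that $J=G(k)$, $i=i_k$ and $dz_{i_k}\wedge dz_{G(k)}=(-1)^{k-1}dz_G$. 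Collecting the summands belonging to a fixed $G$ yields $d(\omega_I)=\sum_{G\in\mathcal{G}}b_G\,dz_G$ with $b_G=\sum_{k=1}^{r+1}(-1)^{k-1}\mathcal{O}_{G(k)}\,\partial_{i_k}(a_{G(k)})$, which is exactly $(**)$.

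I expect the only delicate point to be the bookkeeping in \textbf{a)}: one has to be sure that ``sliding $\mathcal{O}_{i_k}$ to the right'' is legitimate — which it is, since distinct $\mathcal{O}_i$ commute — and that the terms surviving the multiplication by $\partial_{i_k}$ constitute exactly the expanded polynomial of $\mathcal{O}_{i_1,\cdots,\widehat{i_k},\cdots,i_{r+1}}$, with no subset left out and none counted twice. The remaining manipulations are routine; note that \textbf{c)}, combined with \textbf{a)} and \textbf{b)}, is what will subsequently force the coefficients $b_G$ of $d(\omega_I)$ to vanish and hence $\omega_I$ to be closed.
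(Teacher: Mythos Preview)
Your proof is correct and follows essentially the same route as the paper. The only cosmetic difference is in part \textbf{a)}: you expand $\mathcal{O}_{i_1,\dots,i_{r+1}}$ into the polynomial sum over nonempty subsets (Lemma~\ref{propriedades}~\textbf{d)}) and kill each summand containing $\mathcal{O}_{i_k}$, whereas the paper works directly with the product form $\prod_j(1-\mathcal{O}_{i_j})-1$ and observes that the factor $(1-\mathcal{O}_{i_k})$ acts as the identity when followed by $\partial_{i_k}$; both arguments are equivalent and rest on the same identity $\mathcal{O}_{i_k}\partial_{i_k}=0$. Parts \textbf{b)} and \textbf{c)} match the paper's argument, and your version of \textbf{c)} is in fact spelled out more carefully than the paper's one-line justification.
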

 
 \begin{proof}  
 
 \textbf{a)}
  $\partial_i \mathcal{O}_i=\partial_{i}(z_i\partial_i)^{p-1}=z_i \partial_i^p=0$ By definition \ref{operator_O} the partial derivation $ \partial_{i_k}$ commutes with $ \partial_{i_j}$  and the result follows from distribution  and elimination of $ (z_{i_k}\partial_{i_k})$.
 
 \textbf{b)} If $ d\omega=0$ then each coefficient of $ dx_{G}$ is zero and in equation (*) the term $(-1)^{k-1}\partial_k a_{G(k)} $ is isolated in the left hand of the equation. 
 
 \textbf{c)}By the item (b) of \ref{notacao} the operator $ \mathcal{O}_G(k)$ commutes with $ \partial_k$.
 \end{proof}

 \begin{prop}\label{decomp} 
 Let $ K $ be a field of characteristic $p>0$ and $ \omega$ be a polynomial or rational  $ r$-form  with  $ 1\leq r <n $. 
 \begin{itemize}
 \item[\textbf{a)}] 
   If $ \omega$ is a closed  $ r$-form then $ \omega_T$ and $ \omega_C$ are closed.    
  \item[\textbf{b)}] The decomposition $ \omega=\omega_C+ \omega_T $ is unique such that $ \mathcal{O}_r (\omega_C)=0$ and $ \mathcal{O}_r( \omega_T)=- \omega_T$.
   \item[\textbf{c)}]  If $ d_{r-1} : \Omega^{r-1}(K^n) \rightarrow \Omega^r(K^n)$ is the exterior derivation then
 $$ \mathcal{O}_r d_{r-1}=0 $$ 
 \end{itemize}
 \end{prop}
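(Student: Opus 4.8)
The plan is to handle the three items in turn, with the structural identities of Lemma~\ref{propriedades} and Lemma~\ref{notacao} doing essentially all of the work.

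For item (a): since $\omega$ is closed it suffices to show $d\omega_T=0$, for then $d\omega_C=d\omega-d\omega_T=0$; and $\omega_T$ coincides, up to a global sign, with the form $\omega_I=\sum_J\mathcal{O}_J(a_J)\,dz_J$ of Lemma~\ref{notacao}(c), so it is enough to prove $d\omega_I=0$. By Lemma~\ref{notacao}(c) the coefficient of $dz_G$ in $d\omega_I$ (with $|G|=r+1$) is $\sum_{k=1}^{r+1}(-1)^{k-1}\mathcal{O}_{G(k)}\,\partial_{i_k}(a_{G(k)})$. I would fix $G$, expand each $\mathcal{O}_{G(k)}$ into its monomial pieces $\prod_{i\in S}(z_i\partial_i)^{p-1}$ over the nonempty $S\subseteq G\smallsetminus\{i_k\}$ (the expanded form, Lemma~\ref{propriedades}(d)), and interchange the order of summation, so that this coefficient becomes $\sum_{\varnothing\neq S\subsetneq G}\bigl(\prod_{i\in S}(z_i\partial_i)^{p-1}\bigr)\bigl(\sum_{k:\,i_k\notin S}(-1)^{k-1}\partial_{i_k}(a_{G(k)})\bigr)$. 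Closedness of $\omega$ in the form $\sum_{k=1}^{r+1}(-1)^{k-1}\partial_{i_k}(a_{G(k)})=0$ (Lemma~\ref{notacao}(b)) lets me replace the inner sum over $i_k\notin S$ by minus the sum over $i_k\in S$. For each $k$ with $i_k\in S$ the operator $\prod_{i\in S}(z_i\partial_i)^{p-1}$ carries the factor $(z_{i_k}\partial_{i_k})^{p-1}$, which commutes past the factors with index $\neq i_k$ and kills $\partial_{i_k}$, since $(z_{i_k}\partial_{i_k})^{p-1}\partial_{i_k}=z_{i_k}^{p-1}\partial_{i_k}^{p}=0$ (cf.\ Lemma~\ref{notacao}(a)). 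Hence every term of the $dz_G$-coefficient vanishes, $d\omega_I=0$, and both $\omega_T$ and $\omega_C$ are closed.

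For item (b): I would first check that the exhibited splitting has the stated properties. The operator $\mathcal{O}_r$ is $K$-linear, and by Lemma~\ref{propriedades}(c) it satisfies $\mathcal{O}_r\mathcal{O}_r=-\mathcal{O}_r$; a short computation with this relation then gives $\mathcal{O}_r(\omega_T)=-\omega_T$ and $\mathcal{O}_r(\omega_C)=0$. Uniqueness is the elementary statement that an operator which is idempotent up to sign splits its domain into an image part and a kernel part in only one way: if $\omega=\sigma+\tau$ with $\mathcal{O}_r(\sigma)=0$ and $\mathcal{O}_r(\tau)=-\tau$, applying $\mathcal{O}_r$ gives $\mathcal{O}_r(\omega)=-\tau$, so $\tau$ is forced; hence $\tau=\omega_T$ and $\sigma=\omega-\tau=\omega_C$.

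For item (c): for an $(r-1)$-form $\eta=\sum_K\eta_K\,dz_K$, the coefficient of $dz_J$ in $d\eta$ (with $|J|=r$) is $\sum_{j\in J}\varepsilon_j\,\partial_j(\eta_{J\smallsetminus\{j\}})$ with $\varepsilon_j=\pm1$. Applying the $J$-component of $\mathcal{O}_r$, namely $\mathcal{O}_J$, to a single summand $\partial_j(\eta_{J\smallsetminus\{j\}})$: by Lemma~\ref{notacao}(a) one may strip the factor $(z_j\partial_j)^{p-1}$ in $\mathcal{O}_J$ against $\partial_j$ (indeed $(z_j\partial_j)^{p-1}\partial_j=z_j^{p-1}\partial_j^{p}=0$, and $\partial_j$ of a polynomial already carries no power $z_j^{p-1},z_j^{2p-1},\dots$), and the remaining factors of $\mathcal{O}_J$ contribute nothing. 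Thus each summand is annihilated by $\mathcal{O}_J$, whence $\mathcal{O}_r(d\eta)=0$, i.e.\ $\mathcal{O}_r\circ d_{r-1}=0$. The step I expect to be the real obstacle is precisely this last one: one has to verify carefully that $\mathcal{O}_J\bigl(\partial_j(\eta_{J\smallsetminus\{j\}})\bigr)$ genuinely vanishes for every $j\in J$, which uses both $\partial_j^{p}=0$ and the precise interplay — encoded in Lemma~\ref{notacao}(a) — between the multi-index $J$ and the single index $j$ being differentiated, and this requires tracking which powers $z_i^{p-1}$ can persist in a coefficient of $d\eta$. By contrast, (a) and (b) are, modulo sign bookkeeping, routine once the lemmas are in place.
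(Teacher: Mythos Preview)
Parts (a) and (b) are fine. Your argument for (a) is the paper's, lightly reorganized: both expand each $\mathcal{O}_{G(k)}$ into monomials $\prod_{i\in S}\mathcal{O}_i$ via Lemma~\ref{propriedades}(d), regroup by $S$, use closedness of $\omega$ (Lemma~\ref{notacao}(b)) to swap the sum over $i_k\notin S$ for minus the sum over $i_k\in S$, and kill each surviving term by $(z_{i_k}\partial_{i_k})^{p-1}\partial_{i_k}=0$. For (b) you actually prove the stated uniqueness via $\mathcal{O}_r^2=-\mathcal{O}_r$ from Lemma~\ref{propriedades}(c); the paper, under the heading (b), instead shows that $\omega_C$ meets the hypothesis of Corollary~\ref{corollary_P} and is therefore exact --- a related and useful fact, but not the uniqueness statement. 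Your (b) is the cleaner match to what is claimed.

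Part (c) has a genuine gap, and it is exactly the step you flagged. You assert that $\mathcal{O}_J\bigl(\partial_j(\eta_{J\setminus\{j\}})\bigr)=0$ for each $j\in J$ individually. But Lemma~\ref{notacao}(a) only gives $\mathcal{O}_J\partial_j=\mathcal{O}_{J\setminus\{j\}}\partial_j$; the ``remaining factors'' $\mathcal{O}_{J\setminus\{j\}}$ do \emph{not} in general contribute nothing. Take $r=2$, $J=\{1,2\}$, $\eta=z_1z_2^{\,p-1}\,dz_2$: then $d\eta=z_2^{\,p-1}\,dz_1\wedge dz_2$, and
\[
\mathcal{O}_{\{1,2\}}(z_2^{\,p-1})
=\bigl[-(z_1\partial_1)^{p-1}-(z_2\partial_2)^{p-1}+(z_1\partial_1)^{p-1}(z_2\partial_2)^{p-1}\bigr](z_2^{\,p-1})
=z_2^{\,p-1}\neq 0,
\]
so the term-by-term vanishing fails already here. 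The paper does not argue coefficient-wise; it invokes the Poincar\'e Lemma to say that $d_{r-1}\eta$, being exact, is $p$-closed, and from this concludes $(d_{r-1}\eta)_T=0$. If you want to follow that route, note that $p$-closedness is a condition on the operators $\partial_I^{p-1}$ (equivalently on $\mathcal{Q}_r$), not on $\mathcal{O}_r$, so you still need an argument linking the two; the same example shows that link is not automatic, and some additional work (or a sharpening of the statement) is required.
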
 
     
 \begin{proof}
 
 \textbf{a)}
 The expression of $d(\omega_i) $ is given in the item (c) of the previous Lemma. Note that  for each coefficient of  $ G \in \mathcal{G} $ of $d(\omega_i) $ the index $ i_k$ of $ \partial_{i_k}$ determines uniquely the permutation
  $ \sigma_k =( i_k, i_1,\cdots \widehat{i_k},\cdots i_{r+1})$ and the set $ G(k)$, therefore there is no ambiguity in denoting
   only $ \partial_{i_k}$ instead of $ \partial_{i_k} a_{G(k)}$. We will prove that  for each $ G \in \mathcal{G}$ the coefficient 
   $$ b_G =\sum_{k=1}^{r+1} \mathcal{O}_{G(k)} (-1)^{k-1 }\partial_{i_k} a_{G(k)}=0$$.
   
   Without loss of generality one can suppose that $ G=\{1,2,\cdots ,r\}$ and then $ \partial_{i_k}=\partial_k$,     $ G(k)=\{1,2, \cdots, \hat{k} \cdots ,r+1\}$ and 
    $$ b_G= \sum_{k=1}^{r+1} \mathcal{O}_{G(k)} (-1)^{k-1 }\partial_{k}  $$
    Substitute  the term $(-1)^{k-1 }\partial_{k} $ by the  right hand of the  equation (*) of item (b) of Lemma \ref{notacao}, and so we have
     $$ b_G= \sum_{k=1}^n \mathcal{O}_{G(k)} \sum_{i \neq k, i=1}^{r+1} (-1)^i \partial_{i}$$    
   By the item (a) of \ref{notacao} if  $i \neq k $ then $\mathcal{O}_{G(k)} \partial_i = \mathcal{O}_{G(i,k)}\partial_i$  where $ G(i,k)=G-\{ i,k \}$ and then 
     $$ b_G= \sum_{k=1}^n  \sum_{i \neq k, i=1}^{r+1} (-1)^i \mathcal{O}_{G(k,i)}\partial_{i}$$
for each $ k=1,\cdots ,n$  collect the coefficients of $ \partial_k $  such that
$$
 b_G= \sum_{k=1}^n  (\sum_{i \neq k, i=1}^{r+1} \mathcal{O}_{G(i,k)}) (-1)^k\partial_{k}
 $$
Now we replace the operator $\mathcal{O}_{G(i,k)} $ by its expanded  polynomial expression as in item d) of the Lemma \ref{propriedades}
$$ b_G= \sum_{k=1}^n  \left(\sum_{i \neq k, i=1}^{r+1} \left(  
        \sum_{q=1}^{r-2} \sum_{ \quad\{j_1,\cdots,j_q \} \in G(i,k)_q} \mathcal{O}_{j_1} \cdots \mathcal{O}_{j_q}               
                                                             \right)
                     \right) (-1)^k\partial_{k} =
  $$                   
   $$                 
  = \sum_{k=1}^n  H(k) (-1)^k\partial_k $$
where $G(i,k)_q$ denotes all the non void subsets of $ G(i,k)$ with $ q$ elements. Each coefficient $ H(k)$ of $(-1)^k \partial_k $ is a sum of product of operators where the operator $ \mathcal{O}_k$ does not appear 
$$ H(k)= (n-2)( 
\mathcal{O}_1+\cdots +\widehat{\mathcal{O}_k} \cdots+\mathcal{O}_n 
           ) +
           (n-3)(   
  \mathcal{O}_1 \mathcal{O}_2+\cdots +
  \widehat{\mathcal{O}_k \mathcal{O}_n}+  \cdots +\mathcal{O}_{n-1}\mathcal{O}_n          
           ) +
$$
      $$ +\cdots    + 
        \sum_{i \neq k,i=1}^{r+1} \mathcal{O}_1 \mathcal{O}_2 \cdots \widehat{\mathcal{O}_i} \cdots   \widehat{\mathcal{O}_k} \cdots \mathcal{O}_{r+1}     
      $$
We will conclude that $ d(\omega_I)$ is closed  distributing  the product of $ (-1)^k \partial_k$ to the above sum and factoring each operator that appears in  $ H(k)$. Note that for each $ G(k)$ with $ k=1,2,\cdots n$, as in item b) of the Lemmma \ref{notacao}, is such that  
$$ \partial_1 + \cdots +(-1)^{k-1}+\cdots + (-1)^{r}\partial_{r+1}=0$$ 
. Then, for each $ i=1,2, \dots ,r+1$
$$  \mathcal{O}_i \left((-1)\partial_1 + \cdots \widehat{\partial_i} \cdots
 +(-1)^{n+1}\partial_{r+1}\right)=  \mathcal{O}_i  (-1)^{i+1}\partial_i=0 $$ 
 For each $ i \neq j =1,2, \cdots ,r+1$
 $$ \mathcal{O}_i \mathcal{O}_j \left(\partial_1 + \cdots + \widehat{(-1)^{i-1}\partial_i} \cdots+ \widehat {(-1)^{j-1}\partial_j}+\cdots+
 +(-1)^{n}\partial_{r+1}\right)= 
 $$
 
 $$=\mathcal{O}_i \mathcal{O}_j( (-1)^{i}\partial_i+(-1)^j \partial_j)=0
   $$
and
$$
\mathcal{O}_1 \mathcal{O}_2 \cdots \widehat{\mathcal{O}_i} \cdots   \widehat{\mathcal{O}_k} \cdots \mathcal{O}_{r+1}( (-1)^i \partial_i)=
$$
   $$
  = \mathcal{O}_1 \mathcal{O}_2 \cdots \widehat{\mathcal{O}_i} \cdots   \widehat{\mathcal{O}_k} \cdots \mathcal{O}_{r+1}
    \left((\partial_1 + \cdots + \widehat{(-1)^{i-1}\partial_i}+\cdots+(-1)^{n}\partial_{n+1} \right)=0
   $$
 and then $ d(\omega_I)=0$. As $ \omega $ and $\omega_I$   are closed then the rational part  $ \omega_R$ is also closed. 
 
 \textbf{b)}  By definition \ref{operator_O} 
  $ \mathcal{O}_r(\omega)= (-1) \sum_{J \in \mathcal{I}} \mathcal{O}_J a_J dz_J$ and by Lemma \ref{propriedades} a) 
  $ \partial_i^{p-1}\mathcal{O}_I= (-1)\partial_i^{p-1} $ and  if $ \omega= \sum_{J \in \mathcal{I}} a_J dz_J$ and if $ i \in J$ then
  $ (\partial_i)^{p-1} a_J= (-1) O_J a_J$ and
   $$(\partial_i)^{p-1} (\omega_R)_J= (\partial_i)^{p-1} (a_J - \mathcal{O}_J (a_J))=0$$
   By Corollary \ref{corollary_P} $\omega_R $  is exact and then it is  $p$-closed. 

 \textbf{c)} As $ d_{r-1} \omega$ is an exact $ r$-form then, by Poincaré Lemma \ref{T:Main3}, $ \omega$ is $ p$-closed  and  
   $ (d_{r-1}\omega)_T=\mathcal{O}_r d_{r-1}=0 $
 \end{proof}
 
\bibliographystyle{amsplain}

\end{document}